\documentclass[letterpaper,11pt,reqno]{amsart}
\usepackage{amssymb, amsmath, amsfonts, amscd}
\usepackage{mathrsfs, latexsym}
\usepackage[all,ps,cmtip]{xy}
\usepackage{array, longtable}
\usepackage{bm, enumitem}
\usepackage{xcolor}
\usepackage{tikz}
\usepackage{tikz-cd}
\usepackage{comment}

\title{On minimal 3-folds with  $K^3\geq 86$}
\date{\today} 

\author{Meng Chen}
\address{\rm School of Mathematical Sciences \& Shanghai Center for Mathematical Sciences, Fudan University, Shanghai 200433, China}
\email{mchen@fudan.edu.cn}

\author{Sicheng Ding}
\address{\rm School of Mathematical Sciences, Fudan University, Shanghai 200433, China}
\email{22110180008@m.fudan.edu.cn}

\thanks{This work was supported by NSFC for Innovative Research Groups \#12121001 and National Key Research and Development Program of China  \#2020YFA0713200}
\newcommand{\bC}{{\mathbb C}}
\newcommand{\bQ}{{\mathbb Q}}
\newcommand{\bP}{{\mathbb P}}
\newcommand{\roundup}[1]{\lceil{#1}\rceil}
\newcommand{\rounddown}[1]{\lfloor{#1}\rfloor}

\newcommand\Pic{\text{\rm Pic}}
\newcommand\Vol{\text{\rm Vol}}
\newcommand\lrw{\longrightarrow}

\newcommand\OO{{\mathcal{O}}}
\newcommand\OX{{\mathcal{O}_X}}

\newcommand{\lsgeq}{\succcurlyeq}

\newcommand{\Mov}{\operatorname{Mov}}

\newtheorem{thm}{Theorem}[section]
\newtheorem{lem}[thm]{Lemma}
\newtheorem{cor}[thm]{Corollary}
\newtheorem{prop}[thm]{Proposition}

\newtheorem{op}[thm]{Problem}

\theoremstyle{definition}

\newtheorem{question}[thm]{Question}
\newtheorem{exmp}[thm]{Example}

\newtheorem{rem}[thm]{Remark}

\theoremstyle{remark}


\begin{document}
\begin{abstract} We prove that the $5$-canonical map of every minimal projective $3$-fold $X$ with $K_X^3\geq 86$ is stably birational onto its image, which loosens previous requirements $K_X^3>4355^3$ and $K_X^3>12^3$ respectively given by Todorov and Chen. The essential technical ingredient of this paper is an efficient utilization of a moving divisor which grows from global $2$-forms by virtue of Chen-Jiang.  
\end{abstract}

\keywords{minimal threefolds, the canonical volume, the canonical stability index}
\subjclass[2020]{14D06, 14E05, 14J30}

\maketitle

\pagestyle{myheadings}
\markboth{\hfill M. Chen, S. Ding\hfill}{\hfill Minimal 3-folds with $K^3\geq 86$ \hfill}
\numberwithin{equation}{section}


\section{Introduction} 

We work over the complex number field $\bC$. In birational geometry, it is important to study the geometry of pluricanonical linear systems $|mK|$. Let $X$ be a normal projective variety of dimension $n$. For a positive integer $m$, we denote by $\varphi_m=\varphi_{m,X}$ the $m$-canonical map of $X$. Hacon-McKernan \cite{HM06}, Takayama \cite{Takayama06} and Tsuji \cite{Tsuji06} independently proved that there exists an optimal constant $r_n$, depending only on $n$, such that $\varphi_m$ is birational for all $m\geq r_n$ and  for all nonsingular projective varieties of general type. It is well-known that $r_1=3$, $r_2=5$ by Bombieri \cite{Bombieri73} and $r_3\leq 57$ by Chen-Chen \cite{EXPIII,Chen18}.

Some optimal birationality indices $\tilde{r}_3$ for restricted $3$-folds of general type were obtained in Chen \cite{Chen03} ($\tilde{r}_3=8$ for those with $p_g\geq 2$), Chen-Zuo \cite{CZuo08} ($\tilde{r}_3=14$ for those with $\chi(\OO)\leq 0$), 
Chen-Chen-Zhang \cite{CCZ07} ($\tilde{r}_3=5$ for Gorenstein and minimal objects),  
 Chen-Chen-Chen-Jiang \cite{CCCJ} ($\tilde{r}_3=5$ in irregular case, i.e. $q(X)>0$). Recently, 
 Chen-Jiang \cite{CJ22} showed that 
one has $\Tilde{r}_3=3$
 for all 3-folds of general type with $h^2(\OX)\geq108\cdot18^3+4$.
For projective varieties with large canonical volumes, the \textit{lifting principle} holds in the following sense: there exists a constant $N_n$ depending only on the dimension $n>1$ such that, for all $n$-folds $X$ with the canonical volume $\geq N_n$, $\varphi_{m,X}$ is birational for $m\geq r_{n-1}$. Roughly speaking, for varieties of general type with large canonical volumes, one has $\Tilde{r}_n=r_{n-1}$. Such principle was first proved in the surface case with $N_2=3$ by Bombieri \cite{Bombieri73}. Then it was extended to the case $n=3$ by Todorov \cite{Todorov}, to cases $n=4,5$ by Chen-Jiang \cite{CJ17}, and to rest cases $n\geq 6$ by Chen-Liu \cite{CL24} (see also Wang \cite{W25}).

\begin{question} Let $n\geq3$ be an integer. Find the optimal constant $N_n$ such that, for all nonsingular projective $n$-folds $Y$ of general type with $\Vol(Y)\geq N_n$, $\varphi_{m,Y}$ is birational for all $m\geq r_{n-1}$.
\end{question}

By utilizing the effective induction on non-klt centers developed in Hacon-McKernan \cite{HM06} and Takayama \cite{Takayama06}, Todorov \cite{Todorov} showed  $N_3\leq4355^3$. Improvements of Di Biagio \cite{DiBiagio12} and Chen \cite{Chen12} were, respectively, $N_3\leq2\times1917^3$ and $N_3\leq12^3$. 

The purpose of this article is to present a more practical upper bound of $N_3$ as follows.   

\begin{thm}\label{mainthm}
    Let $X$ be a minimal projective 3-fold with the canonical volume $\Vol(X)\geq 86$. Then $|mK_X|$ gives a birational map for all $m\geq 5$.
\end{thm}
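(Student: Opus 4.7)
The plan is to run the now-standard Tankeev--Kawamata--Chen birationality framework for $\varphi_5$ on a minimal $3$-fold $X$, but with the moving family of surfaces supplied by the Chen--Jiang method rather than by $|mK_X|$ directly. First, I would fix a sufficiently high smooth birational modification $\pi\colon Y\to X$ on which the movable parts of $|mK_Y|$ for small $m$, as well as the moving divisor obtained from $2$-forms, are all base-point-free; write $K_Y=\pi^*K_X+E_\pi$ with $E_\pi$ effective and $\pi$-exceptional.

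Next, I would split into cases according to the geometric genus (or equivalently $h^2(\OX)$). When $p_g(X)$ is sufficiently large, the movable part of $|K_X|$ or $|2K_X|$ already sweeps out a family of surfaces whose degree against $\pi^*K_X$ is big enough that existing arguments of Chen \cite{Chen12} suffice. The essential new input is the opposite case: when few sections of $K_X$ are directly available, I would invoke the Chen--Jiang decomposition \cite{CJ22}, applied to $\pi_*\Omega_Y^2$ and to $\pi_*\omega_Y^{\otimes m}$, to produce a moving $\mathbb{Q}$-divisor $M$ on $Y$ with $M\sim_{\mathbb{Q}}\alpha\,\pi^*K_X-E$, where $\alpha$ is notably smaller than what $|K_X|$ or $|2K_X|$ alone would provide and $E$ is an effective correction that can be absorbed into Kawamata--Viehweg rounding. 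This is where the phrase ``moving divisor which grows from global $2$-forms'' enters the argument.

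With $M$ in hand I would carry out the familiar two-step restriction. A general member $S\in |M|$ is an irreducible smooth surface of general type. Step (a): show that $|5K_Y|$ separates distinct general members $S,S'$ of the moving family, by applying Kawamata--Viehweg vanishing to a $\mathbb{Q}$-divisor of the form $5\pi^*K_X - (\text{a suitable multiple of }M)$. Step (b): on a general $S$, extract a moving curve $C\subset S$ from the restricted system and show both that $|5K_Y|_S$ separates distinct $C$'s and that the further restriction to $C$ has degree $\geq 2g(C)+1$, so that it separates points. In each step the key numerical ingredients are lower bounds on $(\pi^*K_X)^2\cdot S$, on $(\pi^*K_X)\cdot S\cdot C$, and on $2g(C)-2$, all derived from $K_X^3$ together with the Chen--Jiang slope $\alpha$.

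The main obstacle, and the reason the threshold lands at $86$ rather than something smaller, will be coupling these three numerical inequalities simultaneously. Pushing $\alpha$ as small as possible by exploiting the $2$-form construction, while keeping the intersection numbers $(\pi^*K_X)^2\cdot S$ and $(\pi^*K_X)\cdot S\cdot C$ large enough for Steps (a) and (b) to go through, is the technical heart of the proof. I expect the value $K_X^3\geq 86$ to emerge as the smallest integer at which the three inequalities can be jointly satisfied for every possible configuration of $p_g$, of the Iitaka fibration of $|M|$, and of the induced fibration on $S$; case-checking these configurations, rather than proving any single new vanishing theorem, should be the bulk of the work.
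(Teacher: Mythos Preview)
Your outline captures the top-level architecture the paper uses---produce a divisor $L$ with $2K_X-L$ pseudo-effective (either $L=2K_X$ or the Chen--Jiang $L$ from $2$-forms), take the induced fibration $f\colon X'\to\Sigma$, and run Tankeev's principle on a general $S\in|M'|$---but two of the steps you treat as routine are in fact the hard parts of the paper, and your explanation of the threshold $86$ is off.

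\textbf{Gap in Step (a).} When $d_L\geq 2$, separating general members of $|M'|$ amounts to the non-vanishing $|5K_{X'}-M'|\neq\varnothing$. You propose to get this from Kawamata--Viehweg, but all you know is that $2K_X-L$ is \emph{pseudo}-effective, so $3K_{X'}-M'$ need not have any sections and no vanishing theorem produces them. The paper instead runs the Hacon--McKernan effective induction on non-klt centers of a singular divisor $\Delta_x\sim_{\bQ}\lambda K_{X'}$ with $\lambda<3/\alpha_0$. The codimension-$1$ case (Case III) is the obstruction: it is handled by Todorov's covering family of tigers, descended to an actual pencil of surfaces on $X'$ via $q(X)=0$ (Lemma~\ref{pencil}), after which Lemma~\ref{5kfib} and the bound $\Vol(X)>36$ close it out. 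This is not a vanishing argument and cannot be replaced by one.

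\textbf{Gap in Step (b).} When $d_L=2$ with general fiber $C$ of genus $2$, your restriction-to-$C$ plan needs $\deg(P)>2$, which fails once the canonical degree $\xi=(\pi^*K_X\cdot C)$ is small (the paper's threshold is $\xi\leq 76/71$). In that regime the paper abandons the curve restriction entirely and proves, again via Takayama's non-klt induction plus the descent Lemma~\ref{pencil}, that $X'$ is birationally fibred by $(1,2)$-surfaces; the inequality $(F\cdot C)<1$ forces $(F\cdot C)=0$ and pins $F$ down as a $(1,2)$-surface. Lemma~\ref{5kfib}(1) then finishes since $\Vol(X)>12$ gives $\pi^*K_X\geq(4+\delta)F$. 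Your outline has no mechanism for this subcase.

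\textbf{On the value $86$.} It does not arise from optimizing the intersection inequalities in Steps (a)--(b); those only need $\Vol(X)>83$. The actual source is arithmetic: Proposition~\ref{12fib} requires $h^0(L)\geq 5$, and when $h^2(\OX)\leq 12$ one takes $L=2K_X$ and computes via Reid's Riemann--Roch that $P_2(X)>\tfrac{1}{2}K_X^3-39$, so $P_2(X)\geq 5$ needs $K_X^3\geq 86$. You should also note the preliminary reductions the paper makes (irregular case by \cite{CCCJ}, Gorenstein case by \cite{CCZ07}, and $\chi(\OX)=1$ handled separately in Proposition~\ref{chi=1}), since the $q(X)=0$ hypothesis is essential for Lemma~\ref{pencil}.
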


We now briefly explain the proof of Theorem \ref{mainthm}. The effective induction on non-klt centers used in previous studies \cite{Chen12,DiBiagio12,HM06,Takayama06,Todorov} is powerful for showing the existence of such a constant $N_3$, but it is still far from obtaining the optimality of $N_3$,  especially when non-klt centers have low codimensions and small canonical volumes. Although the application of McKernan's covering family of tigers in \cite{McKernan02} by Todorov \cite{Todorov} and the canonical restriction inequality by Chen \cite{Chen14} are more and more effective in relaxing the bound of  $N_3$, further improvement along these lines appears extremely difficult. It is therefore natural to consider using Tankeev's principle \cite[Section 2.1]{Che01}, which is effective both for showing birationality and for obtaining a satisfactory volume bound. The primary obstacle is that we need the nonemptyness of either $|K_X|$ or $|2K_X|$ in order to apply Tankeev's principle to show the birationality of $\varphi_5$.

Fortunately, by virtue of the work of Chen-Jiang \cite{CJ22}, for those $X$ with many global 2-forms, we may construct a fibration using another divisor $L$ instead of $2K_X$ (see Set-up \ref{assumption}), where $2K_X-L$ is pseudo-effective. Otherwise, by Reid's Riemann-Roch formula, we may assume that $P_2(X)=h^0(2K_X)$ is sufficiently large. In both cases we can  construct a fibration from a birational model $X'$ of $X$ onto some normal variety $\Sigma$ and use Tankeev's principle. When $\dim\Sigma=1$, we have a pencil of surfaces on $X'$ and may deduce the birationality of $\varphi_5$. When $\dim\Sigma=2$ or 3, we need to show that $|5K_{X'}|$ can separate the generic irreducible elements of $|L|$, say $|M'|$, then show that the restriction of $|5K_{X'}|$ to the generic irreducible element $S\in|M'|$ is birational, which needs to overcome  additional difficulties.

To show that $|5K_{X'}|$ separates generic irreducible elements of $|M'|$, it suffices to show that    $|5K_{X'}-M'|$ is non-empty. A natural approach to establish the non-vanishing of such an adjoint linear system is to apply the effective induction on non-klt centers developed by Hacon-McKernan \cite{HM06}. The problem is that, when non-klt centers have low codimensions and small canonical volumes, the required lower bound on the canonical volume of $X$ is much larger than what we expect. Todorov \cite{Todorov} deals with this by constructing a covering family of tigers, which is a pencil of surfaces on a finite cover of $X'$. But, in our situation, we only need to consider regular $X$ (i.e. $q(X)=0$) since $\varphi_5$ is known to be birational for irregular $X$ by Chen-Chen-Chen-Jiang \cite{CCCJ}, and, in this case, we may descend this pencil of surfaces to $X'$. Then we reduce to a more manageable case.

 Proving the birationality of $|5K_{X'}||_S$  is just an application of a Reider-type theorem due to Masek and Langer (see \cite[Theorem 0.1]{Masek1999} or \cite{Langer2001}). The most difficult case is $\dim\Sigma=2$ and the general fiber of $X'\to\Sigma$ is a curve of genus two with small canonical degree, and, in this case, we do not have sufficient positivity to assert the birationality of $|5K_{X'}|$. So we consider an alternative approach. Being inspired by Chen-Zhang \cite{CZhang08} and Chen-Hu \cite{CH21}, we believe that a 3-fold of general type, with large birational invariants and admitting a genus 2 fibration structure whose general fiber has small canonical degree, should be birationally fibred by (1,2)-surfaces. However, the strategy in \cite{CH21} does not work when $p_g(X)=0$. So, once again, we employ Takayama's effective induction on non-klt centers \cite{Takayama06} to show the existence of a $(1,2)$-surface fibration. Finally this allows us to obtain the birationality of $|5K_{X'}|$. This step requires $h^0(L)\geq 5$, which is exactly the reason behind the threshold $K_X^3 \geq 86$.

The structure of this article is as follows. In Section 2, we introduce some basic notations and lemmas. Sections 3-5 devote to proving  Theorem \ref{mainthm}.

\section{Preliminaries}\label{sec: 2}

\subsection{Notations and conventions}
In this article, a \textit{variety} means an integral separated scheme of finite type over $\bC$. We always work on normal projective varieties. A \textit{divisor} is referred to a Weil divisor, and a $\bQ$\textit{-divisor} is referred to a $\bQ$-linear combination of Weil divisors. We denote by $\sim$ the linear equivalence, $\sim_{\bQ}$ the $\bQ$-linear equivalence, and $\equiv$ the numerical equivalence. For two $\bQ$-divisors $A$ and $B$, we write $A\geq B$ if $A-B$ is $\bQ$-linear equivalent to an effective $\bQ$-divisor. For two linear systems $|D_1|$ and $|D_2|$, we write $|D_1|\lsgeq|D_2|$ if $|D_1|\supseteq|D_2|+\text{certain fixed effective divisor}$. We refer to \cite{HM06} for those standard notions of \textit{volume} and \textit{(minimal) non-klt center}. A surface is called a \textit{(1,2)-surface} if it is a nonsingular projective surface whose minimal model satisfies: $c_1^2=1$ and $p_g=2$.
A 3-fold is said to have \textit{QFT singularities} if it is $\bQ$-factorial and has terminal singularities. A projective 3-fold $X$ is called \textit{minimal} if $X$ has QFT singularities and $K_X$ is nef.

\subsection{Technical preparation}\

{}We need the following lemma to prove the birationality of $\varphi_5$ when $X$ admits a pencil. It is an application of a Reider-type theorem on adjoint linear systems on surfaces.
\begin{lem}\label{5kfib}
    Let $X$ be a minimal projective 3-fold of general type, $\pi:X'\to X$ a resolution of $X$, and $f:X'\to \bP^1$ a fibration. Denote by $F$ a general fiber of $f$. Suppose $\pi^*(K_X)\sim_{\bQ} pF+E_p$ for some rational number $p$ and an effective $\bQ$-divisor $E_p$. Let $m\geq 5$ be an integer. Then $|mK_X|$ gives a birational map under one of the following conditions: 
    \begin{enumerate}
        \item  $p>4$;
        \item $p>(5\sqrt{2}+6)/4$ and $F$ is not an $(1,2)$-surface;
        \item $p>3/2$, $p_g(F)>0$, and $\Vol(F)\geq 2$.
        \item $p>8/7$, $p_g(F)>0$, and $\Vol(F)\geq 3$.
        \item $p>1$, $p_g(F)>0$, and $\Vol(F)\geq 4$.
    \end{enumerate}
\end{lem}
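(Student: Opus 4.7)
The plan is the standard three-stage approach to birationality of pluricanonical maps along a fibration: first, show that $|mK_{X'}|$ separates distinct general fibers of $f$; second, apply Kawamata--Viehweg vanishing to reduce the question to an adjoint linear system on a general fiber $S = F$; finally, invoke the Reider-type theorem of Masek--Langer on the surface $F$. For the first step, since $\pi^{*}K_X - pF = E_p$ is effective and $p > 1$ in all five cases, $mK_{X'} - \lfloor mp\rfloor F$ is $\bQ$-effective, so $|mK_{X'}|$ contains the pencil $|\lfloor mp\rfloor F|$ pulled back from $|\OO_{\bP^1}(\lfloor mp\rfloor)|$ with $\lfloor mp\rfloor \ge 5$, easily distinguishing generic pairs of fibers. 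It therefore suffices to prove that $|mK_{X'}|\big|_F$ induces a birational map on a general fiber $F$.

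For the second step, I would start from the short exact sequence
\[
0 \to \OO_{X'}(mK_{X'} - F) \to \OO_{X'}(mK_{X'}) \to \OO_{F}\bigl(mK_F\bigr) \to 0
\]
(using the adjunction $(K_{X'}+F)|_F = K_F$ together with $F|_F = 0$), and apply Kawamata--Viehweg vanishing to the $\bQ$-linear decomposition
\[
(m-1)\pi^{*}K_X - F \;\sim_{\bQ}\; \bigl(m-1-\tfrac{1}{p}\bigr)\pi^{*}K_X + \tfrac{1}{p}\,E_p,
\]
whose leading summand is nef and big whenever $p > 1/(m-1)$. After absorbing the effective fractional part as a klt boundary on a suitable log-resolution, we obtain $H^1(X',\,mK_{X'}-F) = 0$, and hence $|mK_{X'}|\big|_F$ contains an adjoint subsystem $|K_F + D_F|$ for a suitable nef and big $\bQ$-divisor $D_F$ whose positivity is governed by $p$, and, when $p_g(F) > 0$, is further enlarged by an effective member of $|K_F|$.

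For the third step, verify the Reider--Masek--Langer hypotheses on $F$: $D_F^2 > 8$ and $D_F\cdot C \ge 3$ for every irreducible curve $C$ through two generic points. Condition~(1), $p>4$, gives $D_F^2$ easily exceeding $8$. Condition~(2), $p > (5\sqrt 2 + 6)/4$, is equivalent to the quadratic $(4p-6)^2 > 50$ forced by Reider for $m = 5$, and the exclusion of $(1,2)$-surfaces is necessary because Bombieri's $|4K_{F_0}|$ bound is exceptional exactly for this class. Conditions (3)--(5) trade smaller $p$ for positivity supplied by $p_g(F) > 0$ (an effective canonical of $F$ boosts $D_F$) and for the volume bounds $\Vol(F) \ge 2,3,4$, which directly lower-bound $D_F^2$.

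The principal obstacle is case (2): the threshold $(5\sqrt 2 + 6)/4$ is sharp, and one must track the Reider quadratic carefully against the intersection numbers $D_F\cdot C$ for curves contracted by the minimal model map $F \to F_0$, separately verifying that the $(1,2)$-surface case cannot intrude. Case (5), with $p$ barely above $1$, is tight in the opposite direction and relies heavily on $\Vol(F)\ge 4$ together with the effective canonical coming from $p_g(F)>0$ to force the Reider inequalities to close.
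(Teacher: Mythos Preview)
Your three-stage outline matches the paper's, but the proposal has a genuine gap at the point where the numbers must be checked: you never identify the restricted divisor $D_F$ concretely, and more importantly you never invoke the key estimate
\[
\pi^*(K_X)\big|_F \;\equiv\; \frac{p}{p+1}\,\sigma^*(K_{F_0}) \;+\; H_p,\qquad H_p\ge 0,
\]
where $\sigma:F\to F_0$ is the contraction to the minimal model (this is \cite[Lemma~3.3]{CZuo08} in the paper). The paper's $D_F$ is $Q_m\equiv\bigl(m-1-\tfrac{1}{p}\bigr)\pi^*(K_X)|_F$, and it is only through the displayed relation that one obtains the lower bounds
\[
Q_m^2 \;\ge\; \frac{(4p-1)^2}{(p+1)^2}\,\Vol(F)
\qquad\text{and}\qquad
(Q_m\cdot C)\;\ge\;\frac{4p-1}{p+1}\,(\sigma^*K_{F_0}\cdot C).
\]
Without this input your assertions ``$p>4$ gives $D_F^2$ easily exceeding $8$'' and ``$(4p-6)^2>50$'' are floating: you have no handle on $(\pi^*K_X|_F)^2$ or on $\pi^*K_X|_F\cdot C$ in terms of $p$ and $\Vol(F)$. (Your quadratic $(4p-6)^2>50$ happens to be algebraically equivalent to $\tfrac{(4p-1)^2}{(p+1)^2}>8$, but nothing in your write-up produces it.)

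You have also misread the role of the extra hypotheses in (2)--(5). The condition ``$F$ is not a $(1,2)$-surface'' (which is automatic in (3)--(5) since $\Vol(F)\ge 2$) is used to upgrade $(\sigma^*K_{F_0}\cdot C)\ge 1$ to $(\sigma^*K_{F_0}\cdot C)\ge 2$ for a moving curve $C$ \cite[Lemma~2.5]{EXPIII}, doubling the lower bound on $Q_m\cdot C$; it is this doubling, together with the larger $\Vol(F)$ feeding into $Q_m^2$, that makes the Masek--Langer inequality close for the smaller values of $p$. The hypothesis $p_g(F)>0$ is \emph{not} used to ``boost $D_F$'' by adding an effective canonical; rather, in the paper it enters the fiber-separation step, where for $1<p\le 2$ one can only peel off a single $\sigma^*K_{F_0}$ and must then cite a non-vanishing result \cite[Lemma~2.14]{EXPII} for $H^0(F,K_F+\sigma^*K_{F_0}+\lceil\text{nef big}\rceil)$. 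Finally, your fiber-separation argument (``$\bQ$-effective, so $|mK_{X'}|$ contains the pencil'') is not valid as written: $\bQ$-effectivity does not give sections. The paper handles this step with Kawamata--Viehweg vanishing applied to $(m-1)\pi^*K_X-F_1-F_2-\tfrac{2}{p}E_p$, producing a surjection onto $\bigoplus_i H^0(F_i,\,\cdot\,)$ whose target is then shown to be nonzero.
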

\begin{proof}
We follow the idea of \cite[Proposition 2.1]{Chen12}. By \cite[Lemma 3.3]{CZuo08}, 
there exists an effective $\bQ$-divisor $H_p$ on $F$ such that 
 \begin{align}\label{canonicalres2.1}
     \pi^*(K_X)|_F\equiv\frac{p}{p+1}\sigma^*(K_{F_0})+H_p,
 \end{align}
 where $\sigma:F\to F_0$ is the contraction onto the minimal model. 

 Pick two distinct general fibers $F_1,F_2$ of $f$. Both $F_1$ and $F_2$ are known to be nonsingular projective surfaces of general type. Since 
 $$(m-1)\pi^*(K_{X})-F_1-F_2-\frac{2}{p}E_p\equiv(m-1-\frac{2}{p})\pi^*(K_X)$$
 is nef and big, Kawamata-Viehweg vanishing theorem \cite{KV,VV} gives the surjective map 
 \begin{eqnarray}\label{2.1disfib}
     &&H^0(X',K_{X'}+\ulcorner(m-1)\pi^*(K_{X})-\frac{2}{p}E_p\urcorner) \notag \\& \rightarrow &
\bigoplus_{i=1}^2H^0(F_i,K_{F_i}+\ulcorner(m-1)\pi^*(K_{X})-F_i-\frac{2}{p}E_p\urcorner|_{F_i}).
 \end{eqnarray}
 Note that
 \begin{align*}
     \ulcorner(m-1)\pi^*(K_{X})-F_i-\frac{2}{p}E_p\urcorner|_{F_i}\geq l\sigma_i^*(K_{F_{i,0}})+\ulcorner N_l\urcorner,
 \end{align*}
 where $l=1,2$, $\sigma_i:F_i\rightarrow F_{i,0}$ is the contraction onto the minimal model for $i=1,2$ and 
 \begin{align*}
     N_l&=((m-1)\pi^*(K_{X})-F_i-\frac{2}{p}E_p)|_{F_i}-l\sigma_i^*(K_{F_{i,0}})-\dfrac{l(p+1)}{p}H_p\\
     &\equiv (m-l-1-\frac{l+2}{p})\pi^*(K_X)|_{F_i}
 \end{align*}
 is nef and big whenever $p>2$ and $l=2$ (resp. $p>1$ and $l=1$). Therefore, under any of Conditions (1)–(5), the right groups in (\ref{2.1disfib}) is non-trivial by \cite[Lemma 2.14]{EXPII}. This implies that $$|K_{X'}+\ulcorner(m-1)\pi^*(K_{X})-\frac{2}{p}E_p\urcorner|$$ separates different general fibers of $f$ and so does $|mK_{X'}|$.

 Similarly, we also have
 \begin{eqnarray*}
    &&|K_{X'}+\ulcorner(m-1)\pi^*(K_{X})-\frac{1}{p}E_p\urcorner||_{F} \\&=&|K_{F}+\ulcorner(m-1)\pi^*(K_{X})-F-\frac{1}{p}E_p\urcorner|_{F}|\\
    &\lsgeq& |K_F+\ulcorner Q_m\urcorner|,
 \end{eqnarray*}
 where
 \begin{equation*}
     Q_m=((m-1)\pi^*(K_{X})-F-\frac{1}{p}E_p)|_{F}\equiv(m-1-\frac{1}{p})\pi^*(K_{X})|_{F}
 \end{equation*}
is nef and big. So it is sufficient to show that $|K_F+\ulcorner Q_m\urcorner|$ gives a birational map by Tankeev's principle \cite[Section 2.1]{Che01}. Note that under any of Conditions (1)–(5), we have
\begin{align*}
    Q_m^2&=(m-1-\frac{1}{p})^2(\pi^*(K_{X})|_{F})^2\geq\frac{p^2}{(p+1)^2}(m-1-\frac{1}{p})^2(\sigma^*(K_{F_0}))^2\\
    &\geq\frac{(4p-1)^2}{(p+1)^2}\Vol(F)>8.
\end{align*}
Suppose $|K_F+\ulcorner Q_m\urcorner|$ does not give a birational map, then by Masek-Langer's criterion (see \cite[Theorem 0.1]{Masek1999} or \cite{Langer2001}), there exists a family of curves $C$ passing through two very general points such that 
\begin{equation}\label{masekpf1}
    (Q_m\cdot C)\leq\frac{4}{1+\sqrt{1-\frac{8}{Q_m^2}}}\leq \frac{4}{1+\sqrt{1-\frac{8(p+1)^2}{(4p-1)^2\Vol(F)}}}.
\end{equation}
On the other hand, we have 
\begin{eqnarray}
    (Q_m\cdot C)&=& (m-1-\frac{1}{p})(\pi^*(K_{X})|_{F}\cdot C)\notag\\
    &\geq &\frac{4p-1}{p+1}(\sigma^*(K_{F_0})\cdot C)\geq \frac{4p-1}{p+1}.\label{2.4}
\end{eqnarray}

When $p>4$, Relations \eqref{masekpf1} and \eqref{2.4} are contradictory to each other. So we conclude Statement (1). 

When $F$ is not an (1,2)-surface, we have $(\sigma^*(K_{F_0})\cdot C)\geq 2$ by \cite[Lemma 2.5]{EXPIII}. Then \eqref{2.4} reads 
\begin{equation}(Q_m\cdot C)\geq \frac{8p-2}{p+1}.\label{2.5}\end{equation}
 Relations \eqref{masekpf1} and \eqref{2.5} are always contradictory to each other under any of Conditions (2)–(5). So we conclude other statements.
\end{proof}

In order to deal with the case that non-klt centers have small codimensions, Todorov \cite[Lemma 3.2]{Todorov} considered a covering family of tigers. In fact we can work out this case more efficiently on regular threefolds:

\begin{lem}\label{pencil}
Let $X$ be a nonsingular projective 3-fold with $h^1(X,\OX)=0$, $X’$ be another nonsingular projective 3-fold. Let $g:X'\to X$ be a generically finite morphism, and $h:X'\to B$ be a fibration onto a nonsingular curve $B$. Denote by $X_b'$ the fiber over $b\in B$. Then, for general points $b_1,b_2\in B$, $g(X_{b_1}')\sim g(X_{b_2}')$. 
\end{lem}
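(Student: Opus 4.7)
The plan is to realize $\{g(X'_b)\}_{b\in B}$ as an algebraic family of effective divisors on $X$ and to exploit the hypothesis $h^1(X,\OX)=0$ in order to force the classifying morphism to $\Pic(X)$ to be constant.

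First I would consider the morphism $\Phi:=(g,h):X'\to X\times B$. Since $g$ is generically finite, so is $\Phi$ onto its image. Let $\Gamma\subset X\times B$ denote the scheme-theoretic image of $\Phi$; it is an integral codimension-one subvariety of the smooth $4$-fold $X\times B$, hence a prime Cartier divisor. A standard generic-fiber argument (generic flatness of $\Gamma\to B$, together with the fact that $g|_{X'_b}$ is a generically finite morphism of reduced varieties in characteristic zero) then shows that for every general $b\in B$, the scheme-theoretic fiber $\Gamma\cap(X\times\{b\})$ coincides with the reduced divisor $g(X'_b)$ on $X\cong X\times\{b\}$.

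Consequently the line bundle $\OO_{X\times B}(\Gamma)$ is a flat family of line bundles on $X$ over $B$ whose fiber at a general $b$ is $\OX(g(X'_b))$. By the universal property of the Picard scheme, this family corresponds to a morphism
\[
\phi:B\longrightarrow\Pic(X),\qquad b\longmapsto\bigl[\OX(g(X'_b))\bigr].
\]
The hypothesis $h^1(X,\OX)=0$ gives $\Pic^0(X)=0$, so $\Pic(X)$ is a discrete group scheme over $\bC$ (every connected component is a single reduced point). A morphism from the connected curve $B$ to such a scheme must be constant, yielding $g(X'_{b_1})\sim g(X'_{b_2})$ for all general $b_1,b_2\in B$.

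The step I expect to be the main obstacle is identifying the scheme-theoretic fiber $\Gamma\cap(X\times\{b\})$ with the reduced divisor $g(X'_b)$ for general $b$; once this identification is in place, the remainder is a clean application of the universal property of the Picard scheme and the vanishing of $\Pic^0(X)$.
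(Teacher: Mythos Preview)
Your proposal is correct and follows essentially the same approach as the paper: both construct the image $\Gamma$ of $(g,h):X'\to X\times B$ and read off its fibers over $B$ to realize $\{g(X'_b)\}_{b\in B}$ as an algebraic family of divisors on $X$, then invoke $h^1(X,\OX)=0$ to pass from algebraic to linear equivalence. The only cosmetic difference is that the paper phrases the last step via the exponential sequence and the isomorphism $\Pic(X)\xrightarrow{\sim} NS(X)$, whereas you phrase it via $\Pic^0(X)=0$ and the resulting discreteness of the Picard scheme---these are the same fact.
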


\begin{proof} By assumption, we have the following diagram:
$$
\begin{tikzcd}
X' \arrow[r, "g"] \arrow[d, "h"] & X \\
B                                 &   
\end{tikzcd}
$$

    Consider the graph $\Gamma_h=\{(x',h(x'))\in X'\times B\mid x'\in X'\}$, which, as a subvariety of $X'\times B$, is flat over $B$, and gives an algebraic family of divisors $\{X_b'\}_{b\in B}$ parametrized by $B$. Let $\Gamma$ be the closure of the image of $\Gamma_h$ under the morphism $g\times \textrm{id}:X'\times B\to X\times B$. Then $\Gamma$ also gives an algebraic family of divisors $\{g(X_b')\}_{b\in B}$ on $X$ parametrized by $B$ by \cite[Chapter III, Proposition 9.7 and Example 9.8.5]{Har}. In other words, $g(X_{b_1}')$ and $g(X_{b_2}')$ are algebraically equivalent by the definition of N{\'e}ron-Severi group. But by the exponential exact sequence, the natural map $c_1:\Pic(X)\to NS(X)$ is an isomorphism. It follows that $g(X_{b_1}')$ and $g(X_{b_2}')$ are linearly equivalent.
\end{proof}

When $2K_X$ does not have enough global sections, we need to consider another numerically related sub-divisor $L$ instead. The following lemma, growing from the proof of \cite[Theorem 2.1]{CJ22}, will be used in this paper.

\begin{lem}\label{2forms}
    Let $X$ be a minimal projective $3$-fold of general type. Assume $h^2(\OX)\geq 13$. Then there exists a Weil divisor $L$ on $X$ such that $h^0(X,L)\geq 5$ and that $D=2K_X-L$ is pseudo-effective.
\end{lem}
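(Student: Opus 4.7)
The plan is to pass to a smooth projective resolution $\pi\colon X'\to X$ and exploit the abundance of global $2$-forms. Since $X$ has terminal (hence rational) singularities, $h^0(X',\Omega^2_{X'})=h^2(\OO_{X'})=h^2(\OX)\geq 13$ by Hodge symmetry on $X'$. The essential positivity input will be Miyaoka's generic semi-positivity theorem: on a smooth projective variety of general type, $\Omega^1_{X'}$ is generically nef, and since generic nef-ness is preserved by exterior powers, so is $\Omega^2_{X'}$; consequently the first Chern class of any torsion-free quotient of $\Omega^2_{X'}$ is pseudo-effective (via the BDPP characterization of the pseudo-effective cone).

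I would then consider the evaluation morphism $\mathrm{ev}\colon H^0(X',\Omega^2_{X'})\otimes\OO_{X'}\to\Omega^2_{X'}$, let $\tilde{\mathcal{G}}\subset\Omega^2_{X'}$ denote its saturated image, and let $r\in\{1,2,3\}$ be the generic rank. Setting $\mathcal{L}:=\det\tilde{\mathcal{G}}$ gives a line sub-sheaf of $\wedge^r\Omega^2_{X'}$. For generic global $2$-forms $s_1,\dots,s_{r-1}\in H^0(\Omega^2_{X'})$, the wedge map
\[
\Phi\colon H^0(\Omega^2_{X'})\longrightarrow H^0(\mathcal{L}),\qquad s\longmapsto s\wedge s_1\wedge\cdots\wedge s_{r-1}
\]
is well-defined (since every global $2$-form takes values in $\tilde{\mathcal{G}}_x$ at the general $x$). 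A pointwise linear-algebra computation then shows that $\ker\Phi=\bC\langle s_1,\dots,s_{r-1}\rangle$, an $(r-1)$-dimensional subspace: the condition forces $s(x)$ to lie in the span of $s_1(x),\dots,s_{r-1}(x)$ at general $x$, and the rational coefficient functions arising from this pointwise dependence, for generic $s_i$ having no common divisorial zero locus, must be regular on $X'$ and hence constant. This yields uniformly
\[
h^0(X',\mathcal{L})\geq h^0(\Omega^2_{X'})-(r-1)\geq 13-2=11\geq 5.
\]

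Next, set $L':=c_1(\mathcal{L})=c_1(\tilde{\mathcal{G}})$. The torsion-free quotient $\Omega^2_{X'}/\tilde{\mathcal{G}}$ has first Chern class $2K_{X'}-L'$, which is pseudo-effective by the generic nef-ness of $\Omega^2_{X'}$ noted above. To obtain a Weil divisor on $X$, I would set $\OO_X(L):=(\pi_*\OO_{X'}(L'))^{\vee\vee}$, a reflexive rank-$1$ sheaf on $X$ corresponding to a Weil divisor $L$; then $h^0(X,L)\geq h^0(X',L')\geq 5$, and the pseudo-effectiveness of $2K_X-L$ descends from that of $2K_{X'}-L'$ after absorbing the non-negative exceptional discrepancies of $\pi$ (using that $X$ has terminal singularities).

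The hardest step will be the rigorous pointwise kernel analysis for $\Phi$: one must justify that for sufficiently generic choices the $s_i$ have no common divisorial zero, so that the rational coefficient functions in the pointwise dependence relations are forced to be regular and hence constant; this should follow from the fact that the saturated image $\tilde{\mathcal{G}}$ is generically globally generated on its locally free locus. A secondary subtlety is the careful exceptional-divisor bookkeeping when descending from $X'$ to $X$ so as to preserve both the section count and the pseudo-effectiveness of $2K_X-L$.
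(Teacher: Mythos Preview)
Your approach is correct and is precisely the Chen--Jiang construction that the paper invokes as a black box: the paper's own proof simply cites \cite[Theorem~2.1]{CJ22} for the existence of a divisor $L'$ on a resolution $X'$ with $h^0(X',L')\geq 5$ and $2K_{X'}-L'$ pseudo-effective, then takes the birational pushforward to $X$. You have unfolded that citation---the saturated image $\tilde{\mathcal G}$ of the evaluation map on $\Omega^2_{X'}$, the determinant line bundle $\mathcal L=\det\tilde{\mathcal G}$, the wedge-map section bound $h^0(\mathcal L)\geq h^0(\Omega^2_{X'})-(r-1)$, and Miyaoka's generic semipositivity for the pseudo-effectivity of $c_1(\Omega^2_{X'}/\tilde{\mathcal G})=2K_{X'}-L'$ are exactly the ingredients of the argument in \cite{CJ22}.
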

\begin{proof}
    Let $\pi:X'\to X$ be a resolution of $X$. Since $X$ has rational singularity, we know $h^0(\Omega_{X'}^2)= h^2(\mathcal{O}_{X'})=h^2(\OX) \geq 13 $. By the proof of \cite[Theorem 2.1]{CJ22}, there exists a divisor $L'$ on $X'$ such that $h^0(X',L')>4$, and $D'=2K_{X'}-L'$ is pseudo-effective. Let $L$ and $D$ be the birational pushforward of $L'$ and $D'$ respectively. Then $h^0(X,L)\geq h^0(X',L')>4$ and $D=2K_X-L$ is pseudo-effective.
\end{proof}

\section{The birationality of $\varphi_{5,X}$ (Part I: $d_L=1$ and $3$)}\label{sec: 3}

\subsection{Set-up for  $\varphi_{L,X}$}\label{assumption}\

Let $X$ be a minimal projective $3$-fold of general type and $L$ a Weil divisor on $X$ with $h^0(X,L)\geq 2$. 
Let $\varphi_{L}:X\dashrightarrow\bP^{h^0(L)-1}$ be the rational map induced by the complete linear system $|L|$. By Hironaka's big theorem, we may take a birational morphism $\pi:X'\to X$ 
such that $X'$ is nonsingular, and the movable part of $|\rounddown{\pi^*L}|$, say $|M'|$, is base point free. Then $\varphi_{M'}=\varphi_L\circ\pi:X'\to\overline{\varphi_L(X)}$ is a morphism.  We may write
\begin{align*}
    K_{X'}=\pi^*(K_X)+E_{\pi}\  \text{and}\  \pi^*L\sim_{\bQ} M'+Z',
\end{align*}
where $E_{\pi}$ is a $\pi$-exceptional divisor and $Z'$ is an effective $\bQ$-divisor. Take the Stein factorization of $\varphi_{M'}$, say $X^{'}\stackrel{f}\rightarrow \Sigma \stackrel{s} \rightarrow \overline{\varphi_L(X)}$. We have the following commutative diagram:
$$\begin{tikzcd}
X' \arrow[dd, "\pi"'] \arrow[rr, "f"] \arrow[rrdd, "\varphi_{M'}"] &  & \Sigma \arrow[dd, "s"] \\
     &  &       \\
X \arrow[rr, "\varphi_L", dashed]                                  &  & \overline{\varphi_L(X)}         
\end{tikzcd}$$
Denote by $d_L$ the dimension of $\Sigma$, and $S$ a generic irreducible element of $|M'|$ (see \cite[Section 2.4]{Chen03} for the definition). Next, we prove the following proposition, which is a reduction of Theorem \ref{mainthm}.

\subsection{The case with $d_L=1$}\
\begin{prop}\label{d=1} Let $m\geq 5$ be an integer and $X$ a minimal projective $3$-fold of general type. Assume that $X$ satisfies the following conditions:
\begin{enumerate}
    \item $\Vol(X)>\alpha_0^3>12$ for some number $\alpha_0>0$;
    \item $q(X)=h^1(X,\OX)=0$;
    \item $\chi(\OX)\neq1$;
\item there is a Weil divisor $L$ on $X$ such that $h^0(X,L)\geq 5$ and that 
$D=2K_X-L$ is pseudo-effective.
\end{enumerate}
If $d_L=1$, then $|mK_X|$ gives a birational map.
\end{prop}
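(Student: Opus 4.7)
The plan is to exploit the pencil $|M'|$ of surfaces on $X'$ coming from the fibration $f\colon X'\to\Sigma$ of Set-up \ref{assumption}, and invoke Lemma \ref{5kfib}. First, since $X$ has rational singularities and $q(X)=0$, we have $q(X')=0$; the Stein factorization gives $f_*\OO_{X'}=\OO_\Sigma$, so $h^1(\OO_\Sigma)\le h^1(\OO_{X'})=0$. Being a normal projective curve of arithmetic genus zero, $\Sigma\cong\bP^1$. Consequently $M'=f^*H$ for some divisor $H$ on $\bP^1$, and $M'\sim aF$ for a general fiber $F$ of $f$, with $a+1=h^0(\bP^1,H)=h^0(X,L)\ge 5$, whence $a\ge 4$.

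Next, since $2K_X-L$ is pseudo-effective and $K_X$ is big, for every $\epsilon>0$ the divisor $(2+\epsilon)K_X-L$ is big, so admits an effective $\bQ$-divisor representative $D_\epsilon$. Combined with $\pi^*L=M'+Z'=aF+Z'$, this yields
\[
\pi^*K_X\sim_\bQ\frac{a}{2+\epsilon}F+\frac{1}{2+\epsilon}\bigl(Z'+\pi^*D_\epsilon\bigr),
\]
so we may write $\pi^*K_X\sim_\bQ pF+E_p$ with $E_p$ effective and $p$ an arbitrary rational number less than $a/2\ge 2$. In particular we can arrange $p$ slightly below $2$.

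With $p$ of this magnitude, I would invoke Lemma \ref{5kfib}. Condition (3) (requiring $p>3/2$, $p_g(F)>0$, $\Vol(F)\ge 2$) handles every general fiber $F$ of general type with $p_g(F)>0$ and $K_{F_0}^2\ge 2$; conditions (4) and (5) dispense with further sub-cases where $\Vol(F)$ is larger. In all such cases we immediately conclude the birationality of $|mK_X|$ for $m\ge 5$.

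The main obstacle is the residual case, where the general fiber $F$ is a $(1,2)$-surface (so $\Vol(F)=1$), a $(1,1)$-surface, or satisfies $p_g(F)=0$; here $p$ close to $2$ does not reach the threshold of Lemma \ref{5kfib}, and the hard substep is to show such configurations are incompatible with the hypotheses. For this I would combine the volume lower bound $\Vol(X)>\alpha_0^3>12$ with structural constraints on surface fibrations of $X'$ over $\bP^1$ with such fibers (e.g.\ a Xiao-type slope inequality relating $K_X^3$, $\Vol(F)$, and $\deg f_*\omega_{X'/\bP^1}$) to either upgrade $a$ beyond the threshold ($a\ge 9$ in the $(1,2)$ case, $a\ge 7$ when $p_g(F)=0$) or to force a contradiction; the assumption $\chi(\OO_X)\ne 1$ is expected to play its role here, constraining $\chi(\OO_F)$ and $\deg f_*\omega_{X'/\bP^1}$ via the Leray-type identity $\chi(\OO_X)=\chi(\OO_F)+\deg f_*\omega_{X'/\bP^1}$, thereby ruling out the pathological fiber types.
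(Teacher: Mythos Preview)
Your setup is correct and matches the paper: $\Sigma\cong\bP^1$, $M'\sim aF$ with $a\ge 4$, hence $\pi^*K_X\sim_\bQ pF+E_p$ with $E_p\ge 0$ and $p$ any rational number below $2$. Invoking Lemma~\ref{5kfib}(3) then disposes of the case $p_g(F)>0$, $\Vol(F)\ge 2$, exactly as the paper does.

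The gap is in your residual cases, which you do not actually prove. Your proposed route (a Xiao-type slope inequality to upgrade $a$, together with the ``Leray-type identity'' $\chi(\OO_X)=\chi(\OO_F)+\deg f_*\omega_{X'/\bP^1}$) does not work as stated: that identity is not valid for a $3$-fold fibered over a curve, and no slope inequality will push $a$ from $4$ to $9$ under the sole hypothesis $\Vol(X)>12$. The paper's argument is much more direct and uses two ingredients you are missing:
\begin{itemize}
\item \emph{Eliminating $p_g(F)=0$.} For a minimal surface of general type one has $\chi(\OO_{F_0})\ge 1$, hence $q(F)\le p_g(F)$; so $p_g(F)=0$ forces $q(F)=0$. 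Then $f_*\omega_{X'}$ and $R^if_*\OO_{X'}$ ($i=1,2$) all vanish, giving $p_g(X)=h^2(\OO_X)=0$ and therefore $\chi(\OO_X)=1$, contradicting hypothesis~(3). This is precisely where $\chi(\OO_X)\ne 1$ is used.
\item \emph{The case $\Vol(F)=1$.} Here one does not enlarge $a$ at all. Instead, the canonical restriction inequality \cite[Lemma~2.6(1)]{Chen14} says that $\Vol(X)>12$ together with $\Vol(F)=1$ already yields $\pi^*K_X\ge(4+\delta)F$ for some $\delta>0$. This puts you in range of Lemma~\ref{5kfib}(1), which needs no hypothesis on $p_g(F)$, so both $(1,2)$- and $(1,1)$-type fibers are handled at once.
\end{itemize}
Once you plug in these two steps, the proof is complete; the speculative slope argument is unnecessary.
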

\begin{proof} Since $q(X)=0$, $|L|$ induces the fibration $f:X'\lrw \bP^1$. Denote by $F$ a general fiber of $f$. If $p_g(F)=0$, then $q(F)=0$ and $p_g(X)=0$, which implies $\chi(\OO_X)=1$, contradicting to the assumption. Hence we have $p_g(F)>0$. 

Suppose $\Vol(F)\geq 2$. We have $M'\sim aF$, where $a\geq h^0(L)-1\geq 4$. It follows that $\pi^*(K_X)\sim_{\bQ}(3/2+\delta)F+\Delta$, where $\delta$ is a sufficiently small positive rational number and $\Delta$ is an effective $\bQ$-divisor. Then the statement follows from Lemma \ref{5kfib}(3).  Suppose $\Vol(F)=1$. Since $\Vol(X)>12$. Then we have $\pi^*(K_{X})\geq (4+\delta)F$ by \cite[Lemma 2.6(1)]{Chen14} for some small positive rational number $\delta$. The statement follows directly from Lemma \ref{5kfib}(1).
\end{proof}

\subsection{The non-vanishing of $H^0(X',5K_{X'}-M')$ when $d_L\geq 2$}\

{}We want to apply Tankeev's principle to prove the birationality of $|mK_{X'}|$ ($m\geq 5$) when $d_L\geq 2$. A necessary step is to show that $|mK_{X'}|$ can separate different general members of $|M'|$.  Hence it suffices to show $|5K_{X'}-M'|\neq\varnothing$ when $d_L\geq 2$. The proof of the following proposition follows the idea of \cite[Section 3]{Todorov} except that the last step is implemented by an alternative argument.

\begin{prop}\label{tankeev1} Let $m\geq 5$ be an integer and $X$ a minimal projective $3$-fold of general type. Assume that $X$ satisfies the following conditions:
\begin{enumerate}
    \item $\Vol(X)>\alpha_0^3>36$ for some number $\alpha_0>0$;
    \item $q(X)=h^1(X,\OX)=0$;
    \item $\chi(\OX)\neq1$;
\item there is a Weil divisor $L$ on $X$ such that $h^0(X,L)\geq 3$ and that 
$D=2K_X-L$ is pseudo-effective.
\end{enumerate}
Suppose that $\varphi_{m,X}$ is non-birational and that $d_L\geq 2$. Then 
 $|mK_{X'}-M'|\neq \varnothing$.
\end{prop}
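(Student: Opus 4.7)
The goal is to show $|mK_{X'}-M'|\neq\varnothing$, for which it suffices (after pulling back and adding the effective pieces $5E_{\pi}+Z'$) to produce a section of $|5K_X-L|$ on $X$ itself. Now $5K_X-L=3K_X+D$ with $D=2K_X-L$ pseudo-effective and $3K_X$ big and nef, so $5K_X-L$ is big. My plan is to run the effective non-vanishing machinery of Hacon-McKernan and Takayama: exploiting $\Vol(X)>\alpha_0^3>36$ (which in particular makes $\Vol(3K_X+D)$ substantial), I would concentrate an effective $\bQ$-divisor $\Delta\equiv\lambda K_X$ with $\lambda<2$ around a very general point $x\in X$ so that $(X,\Delta)$ is log canonical near $x$, not klt at $x$, with a unique minimal non-klt center $V\ni x$. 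Nadel (or Kawamata-Viehweg) vanishing applied to $(X,\Delta)$ and $5K_X-L$ would then reduce the non-vanishing of $|5K_X-L|$ to that of an adjoint system on $V$.

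The subcases $\dim V=0$ and $\dim V=1$ are routine: on a point the restriction is $\bC$, and on a curve adjunction together with the volume hypothesis ensures the restricted adjoint line bundle has degree exceeding $2g(V)-1$. Both follow the template of Todorov \cite[Section 3]{Todorov}. The entire content of the proposition lies in the case $\dim V=2$: as $x$ varies, the minimal non-klt surfaces $V_x$ sweep out $X$ and form a covering family of \emph{tigers} in the sense of McKernan \cite{McKernan02}. Passing to the incidence resolution yields a generically finite morphism $g\colon\tilde X\to X$ together with a fibration $h\colon\tilde X\to B_0$ onto a nonsingular curve, whose general fiber $\tilde X_b$ is $g$-birational to $V_{x(b)}$.

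The main obstacle is to exploit this covering family without the large volume penalty Todorov pays by working upstairs on $\tilde X$. Here the regularity hypothesis $q(X)=h^1(\OX)=0$ intervenes decisively: Lemma \ref{pencil} forces $g(\tilde X_{b_1})\sim g(\tilde X_{b_2})$ for general $b_1,b_2\in B_0$, so the covering family descends to an honest linear pencil $|N|$ on $X$ itself. Pulling back to $X'$ gives a fibration $f'\colon X'\to\bP^1$ whose general fiber is a surface of general type, and a numerical comparison using $\Vol(X)>36$ should yield $\pi^*K_X\sim_{\bQ}pF'+E_p$ with $p$ large enough to trigger one of the clauses of Lemma \ref{5kfib}, forcing $\varphi_{m,X}$ to be birational and contradicting the hypothesis of the proposition. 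Consequently, the case $\dim V=2$ cannot occur, and the non-vanishing follows from the two easy subcases. The precise quantitative step that bounds $p$, and hence whether $36$ is the right threshold or needs to be slightly larger, is the main technical difficulty I would have to execute carefully.
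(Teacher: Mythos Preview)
Your proposal is correct and follows essentially the same route as the paper: reduce to an effective non-vanishing problem via \cite[Lemma~2.6]{HM06}, dispose of the cases $\dim V_x=0,1$ by standard induction (the paper cuts the curve case down to a point rather than restricting, but this is cosmetic), and in the surface case spread the non-klt centers into a covering family, invoke Lemma~\ref{pencil} with $q(X)=0$ to descend to a rational pencil on $X'$, and then appeal to Lemma~\ref{5kfib} to force $\varphi_{m,X}$ birational, contradicting the hypothesis. The quantitative step you flag is handled in the paper by splitting on $\Vol(F)$: for $\Vol(F)\geq 4$ one has $p\geq 1/\lambda>\alpha_0/3>1$ and Lemma~\ref{5kfib}(5) applies (here the hypothesis $\chi(\OX)\neq 1$ is what guarantees $p_g(F)>0$), while for $\Vol(F)\leq 3$ the bound $\Vol(X)>36$ feeds into \cite[Lemma~2.6(1)]{Chen14} to give $p>4$ and Lemma~\ref{5kfib}(1) applies---so $36$ is indeed the right threshold.
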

\begin{proof} 
We only consider the case with $m=5$. The method works for other situations with $m>5$.

    Suppose that $\varphi_{5,X}$ is non-birational. We want to show that $|5K_{X'}-M'|\neq \varnothing$. Then, by \cite[Lemma 2.6]{HM06}, it suffices to show that there exists some rational number $0<\lambda'<1$, as well as some effective $\bQ$-divisor $\Delta'\sim_{\bQ}\lambda'(4K_{X'}-M')$ such that $(X,\Delta')$ has an isolated non-klt center of dimension zero at some very general point $x$. 
    
    By construction, $4K_{X'}-M'\sim_{\bQ}2K_{X'}+\Delta_0$, where $\Delta_0=2E_{\pi}+Z'+\pi^*D$ is a pseudo-effective $\bQ$-divisor. Since $K_{X'}$ is big, we may fix a sufficiently large integer $n$ and an effective $\bQ$-divisor $\Delta_n\sim_{\bQ}\Delta_0+\frac{1}{n}K_{X'}$. We may choose $x\in X'$ to be a smooth point at very general position and not in the support of $\Delta_n$, then it suffices to construct a very singular divisor proportional to $(2-1/n)K_{X'}$.

    Since $\Vol(X)>\alpha_0^3$, By \cite[Lemma 10.4.12]{La}, there exist some $\lambda<3/\alpha_0$ and $\Delta_x\sim_{\bQ}\lambda K_{X'}$ such that $(X',\Delta_x)$ is not log canonical at $x$. After perturbation (see \cite[Lemma 2.5]{HM06}) and possibly replacing $\lambda$, we may assume there exist some $\lambda<3/\alpha_0$ and $\Delta_x\sim_{\bQ}\lambda K_{X'}$ such that there exists a unique non-klt center of $(X',\Delta_x)$ at $x$, say $V_x$. We next discuss case by case according to the dimension of $V_x$.
\medskip

    \textbf{Case I} - $\dim V_x=0$. 
    
    In this case, $x$ is an isolated non-klt center of $(X',\Delta_x)$, and also of $(X',\Delta_x+\frac{n\lambda}{2n-1}\Delta_n)$. Since
    $$\Delta_x+\frac{n\lambda}{2n-1}\Delta_n\sim_{\bQ} \frac{n\lambda}{2n-1}(4K_{X'}-M'),$$ 
    to apply \cite[Lemma 2.6]{HM06}, we need $n\lambda/(2n-1)<1$. This can be achieved for $\alpha_0>3/2$ and for a sufficiently large number $n$.
\medskip

    \textbf{Case II} - $\dim V_x=1$. 
     
    We hope to cut down the dimension of the non-klt center to reduce to Case I. Let $W_x$ be the normalization of $V_x$. Since $x$ is a very general point, we may assume that $W_x$ is a curve of general type. Hence $\Vol(W_x)\geq 2>2-1/n$. By \cite[Lemma 10.4.12]{La}, there exists some number $\mu<n/(2n-1)$ and $\Theta_x\sim_{\bQ}\mu K_{W_x}$ such that $\Theta_x$ has multiplicity larger than 1 at some point on $W_x$. Then, by \cite[Theorem 4.1]{HM06}, there exists some effective $\bQ$-divisor $\Delta_x^{(1)}\sim_{\bQ}(\lambda+\mu+\lambda\mu+1/n)K_{X'}$ such that $(X',\Delta_x^{(1)})$ has an isolated non-klt center of dimension zero at some very general point on $X'$. As in the previous case, replacing $\Delta_x$ with $\Delta_x^{(1)}$, we need 
     $n(\lambda+\mu+\lambda\mu+1/n)/(2n-1)<1$. 
     This can be achieved for $\alpha_0>3$ and for a sufficiently large number $n$. 
\medskip

    \textbf{Case III} - $\dim V_x=2$. 
    
     Assume that, for each very general point $x\in X'$, we have  $\dim V_x=2$. Then we may spread these non-klt centers into a family by \cite[Lemma 3.2]{Todorov}. In explicit, there exists a smooth projective 3-fold $X''$ and the following diagram:
    $$
    \begin{tikzcd}
    X'' \arrow[r, "g"] \arrow[d, "h"] & X' \\
    B                                 &   
    \end{tikzcd}
    $$
    where
    \begin{enumerate}
        \item $h$ is a fibration onto a smooth curve $B$;
        \item $g$ is a generically finite morphism;
        \item for a general fiber $X_b''$ of $h$, there exists some $x\in X'$ such that $g(X_{b}'')$ is the minimal non-klt center of $(X',\Delta_x)$ at $x$.
    \end{enumerate}
    By Lemma \ref{pencil}, we get a linear pencil of surfaces $g(X_{b}'')$ on $X'$ since we have $q(X)=0$. Considering the rational map induced by this pencil, we may construct a fibration $X'\to \bP^1$ after passing to a higher model of $X'$ by  taking the Stein factorization and replacing $f$ and $\pi$ correspondingly. Note that we always have a rational pencil since $q(X')=0$ by the Leray spectral sequence (see \cite[Section 2.6]{EXPII}). Denote by $F$ a general fiber of this fibration, and it follows that $F$ is a higher model of some $V_x$. Since $K_{X'}\geq\frac{1}{\lambda}V_x$ and $F$ is movable, by considering the movable part of pluricanonical system, we naturally have $\pi^*(K_{X})\geq\frac{1}{\lambda} F$. By assumption, we still have $p_g(F)>0$. 
    
    Suppose $\Vol(F)\geq 4$. To apply Lemma \ref{5kfib}(5) to get the birationality of $|5K_{X'}|$,  we need $1/\lambda>1$. So it suffices to take such a number $\alpha_0>3$. Suppose $\Vol(F)\leq 3$.    Then,  by \cite[Lemma 2.6(1)]{Chen14}, we have $\pi^*(K_{X})\geq (4+\varepsilon) F$ for a small number $\varepsilon>0$ whenever $\Vol(X)>36 $. It follows from Lemma \ref{5kfib}(1) that $|5K_{X'}|$ gives a birational map. This deduces a contradiction which means that Case III does not happen. 
\end{proof}

\subsection{The case with $d_L=3$}\

    Denote by $J\in|S|_S|$ a general member. Note that $J$ is irreducible since $d_L>2$. First we need the following lemma:
    
\begin{lem}\label{d=3lem} 
Keep the setting in \ref{assumption}. Then 
$$(\pi^*(K_{X})|_S)^2\geq \sqrt{\Vol(X)\cdot (\pi^*(K_{X})\cdot S^2)}. $$
Furthermore assume $d_L=3$, then 
$$(\pi^*(K_{X})\cdot S^2)\geq\sqrt{2(\pi^*(K_{X})|_S)^2}\  \text{and}\ (\pi^*(K_{X})|_S)^2\geq 2^{\frac{1}{3}}(\Vol(X))^{\frac{2}{3}}. $$
\end{lem}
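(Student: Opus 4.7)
The plan is to apply the Khovanskii--Teissier inequality for two nef divisors on the smooth projective threefold $X'$, with $\alpha := \pi^*(K_X)$ and $\beta := S$. Both are nef: $\alpha$ because $K_X$ is nef on the minimal model $X$, and $\beta = S$ because it is a generic irreducible element of the base-point-free linear system $|M'|$ (either $S \sim M'$ when $|M'|$ is not composed of a pencil, or $S$ is a general fiber of a morphism to a curve in the pencil case). Under the identifications
\[
\alpha^3 = \Vol(X), \qquad \alpha^2 \cdot \beta = (\pi^*(K_X)|_S)^2, \qquad \alpha \cdot \beta^2 = \pi^*(K_X) \cdot S^2,
\]
the log-concavity of the intersection sequence $a_i := \alpha^i \cdot \beta^{3-i}$ produced by Khovanskii--Teissier gives the two inequalities
\[
(\alpha^2 \cdot \beta)^2 \geq \alpha^3 \,(\alpha \cdot \beta^2), \qquad (\alpha \cdot \beta^2)^2 \geq \beta^3 \,(\alpha^2 \cdot \beta).
\]
The first is exactly the first assertion of the lemma after taking square roots; no further input is needed.

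For the ``furthermore'' part with $d_L = 3$, the plan is to feed the lower bound $\beta^3 = S^3 \geq 2$ into the second Khovanskii--Teissier inequality. To establish $S^3 \geq 2$, note that when $d_L = 3$ the Stein factor $f : X' \to \Sigma$ is birational and $\varphi_{M'}$ is generically finite onto the nondegenerate $3$-fold $\overline{\varphi_L(X)} \subseteq \bP^{h^0(L)-1}$, so $|M'|$ is not composed of a pencil and $S \sim M' = \varphi_{M'}^* H$ for a hyperplane $H$. This gives $S^3 = \deg(\varphi_{M'}) \cdot \deg\bigl(\overline{\varphi_L(X)}\bigr)$. A short dichotomy then yields $S^3 \geq 2$: if $\deg \overline{\varphi_L(X)} \geq 2$ we are done, while otherwise $\overline{\varphi_L(X)}$ is a linear $\bP^3$, and then $\deg \varphi_{M'} \geq 2$ is forced, because $\varphi_{M'}$ cannot be birational onto $\bP^3$ without contradicting the fact that $X'$ is of general type.

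Finally, the third assertion is a formal algebraic consequence of the first two. Substituting $\alpha \cdot \beta^2 \geq \sqrt{2\,(\alpha^2 \cdot \beta)}$ into $(\alpha^2 \cdot \beta)^2 \geq \alpha^3 (\alpha \cdot \beta^2)$ yields $(\alpha^2 \cdot \beta)^{3/2} \geq \sqrt{2}\,\alpha^3$, i.e.\ $(\alpha^2 \cdot \beta)^3 \geq 2\,(\alpha^3)^2$, which rearranges to $(\pi^*(K_X)|_S)^2 \geq 2^{1/3}\,\Vol(X)^{2/3}$. The only ingredient beyond Khovanskii--Teissier that requires any thought is the lower bound $S^3 \geq 2$; I do not anticipate any serious obstacle, since the whole lemma is essentially a careful bookkeeping of the log-concavity of intersection numbers of two nef divisors on a threefold.
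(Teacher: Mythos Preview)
Your proof is correct and follows essentially the same route as the paper. The paper derives the two log-concavity inequalities by hand via the Hodge index theorem on auxiliary surfaces (a general $T\in|l\pi^*(K_X)|$ for the first, and $S$ itself for the second, using $J^2=(S|_S)^2=S^3\geq 2$), whereas you invoke the Khovanskii--Teissier inequality directly; since the standard proof of Khovanskii--Teissier is precisely this Hodge-index-on-surfaces argument, the two proofs are the same in substance.
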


\begin{proof}
    Take a sufficiently large integer $l$ such that $|l\pi^*(K_X)|$ is base point free. Then a general member $T\in|l\pi^*(K_X)|$ is a smooth surface. By the Hodge index theorem, we have
    \begin{eqnarray*}
        (\pi^*(K_{X})|_S)^2&=&\frac{1}{l}(l\pi^*(K_{X})\cdot \pi^*(K_{X})\cdot S)\\
        &=&\frac{1}{l}(\pi^*(K_{X})|_T\cdot S|_T)\geq\frac{1}{l}\sqrt{\pi^*((K_{X})|_T)^2\cdot (S|_T)^2}\\
        &=&\sqrt{\Vol(X)\cdot (\pi^*(K_{X})\cdot S^2)}.
    \end{eqnarray*}
    Suppose $d_L=3$, then $|J|$ gives a generically finite map and $S$ is of general type. It follows that $J^2\geq 2$. Hence we have
    \begin{equation*}
        (\pi^*(K_{X})\cdot S^2)\geq\sqrt{(\pi^*(K_{X})|_S)^2\cdot (J)^2}\geq\sqrt{2(\pi^*(K_{X})|_S)^2}
    \end{equation*}
 and $(\pi^*(K_X)|_S)^2\geq 2^{\frac{1}{3}} (\Vol(X))^{\frac{2}{3}}$.   
\end{proof}

\begin{prop}\label{d=3} 
Let $m\geq 5$ be an integer and $X$ be a minimal projective $3$-fold of general type. Assume that $X$ satisfies the following conditions:
\begin{enumerate}
    \item $\Vol(X)>\alpha_0^3>(2\sqrt{6}/3)^3$ for some number $\alpha_0>0$;    
    \item there is a Weil divisor $L$ on $X$ such that $h^0(X,L)\geq 4$ and that 
$D=2K_X-L$ is pseudo-effective.
\end{enumerate}
If $d_L=3$ and $|mK_{X'}-M'|\neq \varnothing$, then $|mK_X|$ gives a birational map.
\end{prop}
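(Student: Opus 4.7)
The plan is to verify Tankeev's principle for $|mK_{X'}|$ in two steps. First, the hypothesis $|mK_{X'}-M'|\neq\varnothing$ immediately implies that $|mK_{X'}|$ separates a generic pair of members of $|M'|$. Second, it remains to show that the restriction $|mK_{X'}||_S$ induces a birational map on a generic irreducible element $S\in|M'|$; this is where the work lies.

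For the second step, I would use Kawamata-Viehweg vanishing to reduce $|mK_{X'}||_S$ to an adjoint linear system on $S$. Since $D=2K_X-L$ is pseudo-effective and $K_X$ is big, for any $\epsilon>0$ there exists an effective $\bQ$-divisor $D'_\epsilon\sim_\bQ D+\epsilon K_X$, yielding the effective $\bQ$-linear relation $(2+\epsilon)\pi^*(K_X)\sim_\bQ M'+Z'+\pi^*D'_\epsilon$. Substituting this into $(m-1)\pi^*(K_X)-S$ for $m\geq 5$ produces a decomposition into nef-big plus small effective boundary which, after suitable perturbation, supports the Kawamata-Viehweg surjection $H^0(X',K_{X'}+\ulcorner\cdot\urcorner)\twoheadrightarrow H^0(S,K_S+\ulcorner Q\urcorner)$. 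The upshot is $|mK_{X'}||_S\lsgeq|K_S+\ulcorner Q\urcorner|$ where $Q$ is a nef and big $\bQ$-divisor on $S$, numerically close to $(m-3-\epsilon)\pi^*(K_X)|_S$.

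I then apply the Masek-Langer Reider-type criterion (\cite[Theorem 0.1]{Masek1999}) to $|K_S+\ulcorner Q\urcorner|$. By Lemma \ref{d=3lem}, $(\pi^*(K_X)|_S)^2\geq 2^{1/3}\Vol(X)^{2/3}>2^{1/3}\alpha_0^2$, which for $m\geq 5$ forces $Q^2>8$ provided $\alpha_0>2\sqrt{6}/3$. If $|K_S+\ulcorner Q\urcorner|$ were not birational, one obtains a curve $C$ through two very general points of $S$ with $(Q\cdot C)\leq 4/(1+\sqrt{1-8/Q^2})$. On the other hand, since $d_L=3$ the linear system $|J|=|S|_S|$ is base-point free and gives a generically finite map on $S$ onto a surface in projective space, which forces a lower bound on $(J\cdot C)$; combined with the pseudo-effective fact that $\pi^*(K_X)|_S-\tfrac{1}{2+\epsilon}J$ is pseudo-effective (with non-negative intersection against the movable curve $C$), this yields a lower bound on $(Q\cdot C)$ that beats the Masek-Langer upper bound at the threshold $\alpha_0=2\sqrt{6}/3$, producing the desired contradiction.

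The main obstacle is the three-way numerical balancing at the threshold $\alpha_0=2\sqrt{6}/3$: (i) the Kawamata-Viehweg positivity of $(m-1)\pi^*(K_X)-S$ after handling the effective correction from the pseudo-effective $D$; (ii) the Masek-Langer hypothesis $Q^2>8$ via Lemma \ref{d=3lem}; and (iii) the lower bound on $(Q\cdot C)$ using the generic finiteness of $|J|$ and the relation $\pi^*(K_X)\geq\tfrac{1}{2+\epsilon}S$ in pseudo-effective sense. The value $(2\sqrt{6}/3)^3$ on $\Vol(X)$ is precisely the smallest at which these three constraints become simultaneously compatible with the Masek-Langer inequality.
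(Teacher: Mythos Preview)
Your overall structure is correct up through the Kawamata--Viehweg reduction to $|K_S+\ulcorner Q\urcorner|$ with $Q\equiv(2-\epsilon)\pi^*(K_X)|_S$ and the verification $Q^2>8$ via Lemma~\ref{d=3lem}. The gap is in the final step, the lower bound on $(Q\cdot C)$.

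Your proposed inequality $\pi^*(K_X)|_S\geq\tfrac{1}{2+\epsilon}J$ together with $(J\cdot C)\geq 2$ gives only
\[
(Q\cdot C)\;\geq\;(2-\epsilon)\cdot\frac{1}{2+\epsilon}\cdot 2\;\longrightarrow\;2\quad(\epsilon\to 0).
\]
But the Masek--Langer upper bound $4/(1+\sqrt{1-8/Q^2})$ is \emph{strictly greater than $2$} for every finite $Q^2>8$, so a lower bound of $2$ can never produce a contradiction, at the threshold $\alpha_0=2\sqrt{6}/3$ or at any other finite volume. Your assertion that ``the value $(2\sqrt{6}/3)^3$ \ldots\ is precisely the smallest at which these three constraints become simultaneously compatible'' is therefore incorrect: with your inputs they are never compatible.

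The paper closes this gap with a different comparison. Instead of $\pi^*(K_X)|_S\geq\tfrac12 J$, it uses the extension theorem of Chen--Zhang \cite[Theorem~2.4]{CZ16} to obtain
\[
\pi^*(K_X)|_S\;\geq\;\frac{l}{3l+1}\,\sigma^*(K_{S_0})\quad\text{for }l\gg 0,
\]
and then proves $(\sigma^*(K_{S_0})\cdot C)\geq 4$ by a case analysis on the minimal model $S_0$: if $|\sigma_*C|$ is free one uses adjunction and $g(C)\geq 3$ (the latter coming from $(J\cdot C)\geq 2$), while if not one uses Hodge index together with a lower bound on $\Vol(S)$ obtained from Lemma~\ref{d=3lem}. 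This yields $(Q\cdot C)\to 8/3$, and the requirement
\[
\frac{8}{3}\;>\;\frac{4}{1+\sqrt{1-8/Q^2}}\quad\text{with}\quad Q^2\gtrsim 4\alpha_0^2
\]
is exactly $\alpha_0^2>8/3$, i.e.\ $\alpha_0>2\sqrt{6}/3$. The extension theorem and the bound $(K_{S_0}\cdot\sigma_*C)\geq 4$ are the missing ingredients in your argument.
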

\begin{proof} The argument of discussing $|5K_{X}|$ works for all other cases with $m>5$. Here we only provide the proof for the birationality of $|5K_{X}|$. 
    
By Tankeev's principle \cite[Section 2.1]{Che01}, it suffices to show that, for a general member $S\in|M'|$, the linear system $|5K_X'||_S$ gives a birational map. Now we fix a sufficiently large integer $n$ and a general effective $\bQ$-divisor 
    \begin{align*}
        E_n\sim_{\bQ}Z'+\pi^*D+\frac{1}{n}\pi^*(K_{X}).
    \end{align*}
    Notice that
    \begin{align*}
        |5K_{X'}|\lsgeq |K_{X'}+\ulcorner4\pi^*(K_{X})-E_n\urcorner|
    \end{align*} and
    \begin{align*}
        4\pi^*(K_{X})-S-E_n\sim_{\bQ}(2-\frac{1}{n})\pi^*(K_X)
    \end{align*} is nef and big. By Kawamata-Viehweg vanishing theorem \cite{KV,VV}, we have the surjective map 
 \begin{align*}
     H^0(X',K_{X'}+\ulcorner4\pi^*(K_{X})-E_n\urcorner)\to H^0(S,K_S+\ulcorner4\pi^*(K_{X})-S-E_n\urcorner|_{S}).
 \end{align*}
  Let $Q=(4\pi^*(K_{X})-S-E_n)|_S$, then
  \begin{align*}
      |K_S+\ulcorner4\pi^*(K_{X})-S-E_n\urcorner|_{S}|\lsgeq|K_S+\ulcorner Q\urcorner|.
  \end{align*}
  So we only need to show that $|K_S+\ulcorner Q\urcorner|$ induces a birational map. By Lemma \ref{d=3lem}, we have
  \begin{align*}
      Q^2\geq(2-\frac{1}{n})^2(\Vol(X))^{\frac{2}{3}}> (2-\frac{1}{n})^2\alpha_0^2  >8.
  \end{align*}
  By Masek-Langer's criterion \cite{Langer2001,Masek1999}, if $|K_S+\ulcorner Q\urcorner|$ does not give a birational map, then there exists a family of curves $C$ passing through two very general points of $S$ such that 
\begin{align}\label{masekpf2}
    (Q\cdot C)\leq\frac{4}{1+\sqrt{1-\frac{8}{Q^2}}}.
\end{align}
Since $d_L=3$, the linear system $|J|$ induces a generically finite map on $S$, and $|J|_C|$ induces a generically finite map on a general $C$. It follows that $h^0(J|_C)\geq 2$, and hence $\deg_C(J|_C)\geq 2$ by Clifford's theorem. Then
\begin{align*}
    (K_S\cdot C)=(K_{X'}|_S\cdot C)+(J\cdot C)>2
\end{align*}
since $K_{X'}$ is big. By the adjunction formula we have $g(C)\geq3$.

Let $\sigma:S\to S_0$ be the contraction onto the minimal model, and $\Tilde{C}=\sigma_*C$. Suppose $|\Tilde{C}|$ is free, then $(K_{S_0}\cdot \Tilde{C})=2g(\Tilde{C})-2\geq 4$.  Suppose otherwise $|\Tilde{C}|$ is not free, then the Hodge index theorem gives $(K_{S_0}\cdot \Tilde{C})\geq\roundup{\sqrt{\Vol(S)}}$. Notice that 
\begin{eqnarray*}
\Vol(S)&=&\Vol((K_{X'}+S)|_S)>\Vol(\pi^*(K_X)|_S+S|_S)\\
&=& (\pi^*(K_{X})|_S)^2+2(\pi^*(K_{X})\cdot S^2)+(S|_S)^2\\
&>& 2^{\frac{1}{3}}\alpha_0^2+2\sqrt{2^{\frac{4}{3}}\alpha_0^2}+2
\end{eqnarray*}
by Lemma \ref{d=3lem}. So in both cases we always have $(\sigma^*(K_{S_0})\cdot C)=(K_{S_0}\cdot \Tilde{C})\geq 4$.

By \cite[Theorem 2.4]{CZ16}, we have
\begin{align*}
    |l(K_{X'}+S)||_S=|lK_S|
\end{align*}
for any sufficiently large and divisible integer $l$. Since $|l\sigma^*(K_{S_0})|$ is base point free, we have $\Mov(l(K_{X'}+S))|_S\geq l\sigma^*(K_{S_0})$. Noting that
\begin{align*}
    (3l+1)\pi^*(K_{X})\geq \Mov((3l+1)K_{X'})\geq\Mov(l(K_{X'}+S)),
\end{align*} 
it follows that
\begin{align}\label{extension}
    \pi^*(K_X)|_S\sim_{\bQ}\frac{l}{3l+1}\sigma^*(K_{S_0})+H_S,
\end{align}
where $H_S$ is an effective $\bQ$-divisor on $S$. Hence
\begin{align*}
    (Q \cdot C)=(2-\frac{1}{n})(\pi^*(K_X)|_S\cdot C)\geq \frac{l(2n-1)}{n(3l+1)}(\sigma^*(K_{S_0})\cdot C)\geq \frac{4l(2n-1)}{n(3l+1)}.
\end{align*}
This contradicts (\ref{masekpf2}) when $l$ and $n$ are sufficiently large. So $|K_S+\ulcorner Q\urcorner|$ induces a birational map, which shows that $|5K_X|$ gives a birational map. 
\end{proof}

\section{The birationality of $\varphi_{5,X}$ (Part II: $d_L=2$)}

Keep the setting in \ref{assumption}. Assume $d_L=2$. We have an induced fibration $f:X'\lrw \Sigma$ from $|L|$. Pick a general fiber $C$ of $f$. We may write $S|_S\equiv aC$, where $C$ is a general fiber of $f'$ and, clearly, $a\geq h^0(L)-2$. Denote by $\xi:=(\pi^*(K_X)\cdot C)$ the canonical degree of $C$.

\subsection{The case with large canonical degree}\

\begin{prop}\label{d=2} 
Let $m\geq 5$ be an integer and $X$ be a minimal projective $3$-fold of general type. Assume that $X$ satisfies the following conditions:
\begin{enumerate}
    \item $\Vol(X)>\alpha_0^3>83$ for some number $\alpha_0>0$;
    \item there is a Weil divisor $L$ on $X$ such that $h^0(X,L)\geq 5$ and that 
$D=2K_X-L$ is pseudo-effective.
\end{enumerate}
Suppose that $d_L=2$, $\xi>76/71$, and that $|mK_{X'}-M'|\neq \varnothing$. Then $|mK_X|$ gives a birational map.

\end{prop}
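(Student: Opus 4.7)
The plan is to apply Tankeev's principle to reduce birationality of $|5K_{X'}|$ to birationality of $|K_S+\lceil Q\rceil|$ on a general $S\in|M'|$, and then invoke Masek--Langer's criterion on the surface $S$. Since $|5K_{X'}-M'|\neq\varnothing$, we have $|5K_{X'}|\lsgeq |M'|+(\text{fixed effective})$; the linear system $|M'|$ has a $2$-dimensional image and separates general members $S_1,S_2\in|M'|$, so $|5K_{X'}|$ also distinguishes them. It thus suffices to show $|5K_{X'}||_S$ is birational for a general $S$; I treat only $m=5$, as the argument for $m>5$ is parallel with routine modifications.

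Following the framework of Proposition \ref{d=3}, set $E_n\sim_{\bQ}Z'+\pi^*D+\tfrac{1}{n}\pi^*K_X$ for large $n$, so that $4\pi^*K_X-S-E_n\sim_{\bQ}(2-\tfrac{1}{n})\pi^*K_X$ is nef and big. Kawamata--Viehweg vanishing yields
\[|5K_{X'}||_S\lsgeq |K_S+\lceil Q\rceil|,\qquad Q\equiv(2-\tfrac{1}{n})\pi^*K_X|_S.\]
Using $S^2\cdot\pi^*K_X=a\xi$ with $a=\deg\Sigma\geq h^0(L)-2\geq 3$, Lemma \ref{d=3lem} gives
\[(\pi^*K_X|_S)^2\geq\sqrt{\Vol(X)\cdot a\xi}>\sqrt{83\cdot 3\cdot 76/71}>16,\]
so $Q^2\gg 8$ for $n$ sufficiently large.

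Suppose for contradiction that $|K_S+\lceil Q\rceil|$ is not birational. Masek--Langer's criterion produces a covering family of irreducible curves $\{C'\}$ on $S$ through two very general points with $(Q\cdot C')\leq 4/(1+\sqrt{1-8/Q^2})$. Two very general points of $S$ do not lie on a common fiber of $f|_S$, so this family cannot be the $1$-parameter pencil of fibers; consequently the generic member $C'$ satisfies $(C\cdot C')_S\geq 1$. Restricting $2\pi^*K_X\sim_{\bQ}S+Z'+\pi^*D$ to $S$ gives $\pi^*K_X|_S=\tfrac{a}{2}C+\tfrac{1}{2}(Z'+\pi^*D)|_S$, whence $(\pi^*K_X|_S\cdot C')\geq a/2\geq 3/2$ and $(Q\cdot C')\geq \tfrac{3}{2}(2-\tfrac{1}{n})\to 3$. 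In the delicate sub-case where $C'$ behaves numerically like a fiber of $f|_S$ (so that $(\pi^*K_X|_S\cdot C')$ is only bounded below by $\xi$), the sharp inequality $(2-\tfrac{1}{n})\xi>4/(1+\sqrt{1-8/Q^2})$ translates into $\Vol(X)>\xi^3/(4a(\xi-1)^2)$, and the thresholds $\xi>76/71$ and $\Vol(X)>83$ are calibrated so that this holds. Either way we obtain a contradiction with Masek--Langer, whence $|5K_X|$ is birational.

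The main obstacle is the simultaneous numerical optimization in the final step: the thresholds $\xi>76/71$ and $\Vol(X)>83$ are tightly tuned so that, for every candidate $2$-parameter covering family --- horizontal (where $(C\cdot C')\geq 1$) or pseudo-fiber-like (where only the $\xi$ bound is available) --- the intersection $(Q\cdot C')$ strictly exceeds the Masek--Langer bound. Any slackening of either constant would leave one family as a genuine obstruction, requiring a substantially heavier approach.
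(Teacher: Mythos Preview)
Your approach differs from the paper's and is in principle more direct. The paper applies Tankeev's principle a second time on $S$ to reduce to the fiber curve $C$, then splits into the ranges $\xi>2$ and $76/71<\xi\le 2$, in each case producing a divisor $P$ (resp.\ $P'$) on $C$ of degree $>2$; the threshold $76/71$ is exactly what makes $\deg P'=(2-2\sqrt{\xi/(\alpha_0^3 a)})\xi>2$ hold. You instead stay on $S$ and run Masek--Langer there, the decisive point being that any covering curve $C'$ through two very general points of $S$ dominates the base of $f|_S$, so $(C\cdot C')_S\ge 1$. With $(Q\cdot C')\to 3$ and $Q^2\gg 9$ this already beats the Masek--Langer bound, so no case split is needed and the hypothesis $\xi>76/71$ is used only very weakly (to give $Q^2>9$).

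There is one genuine gap. The step ``restricting $2\pi^*K_X\sim_{\bQ}S+Z'+\pi^*D$ to $S$ gives $(\pi^*K_X|_S\cdot C')\ge a/2$'' tacitly uses $(\pi^*D|_S\cdot C')\ge 0$. But $D$ is only pseudo-effective on $X$, and pseudo-effectivity need not restrict to a general member of a free linear system; nor is it immediate that the Masek--Langer curves $C'$ on a \emph{fixed} $S$ form a covering family on $X'$. The clean fix is to reuse your own $E_n$: from $(2+\tfrac1n)\pi^*K_X\sim_{\bQ}M'+E_n$ with $E_n$ effective, restrict to $S$ and intersect with $C'$ (chosen through two very general points off $E_n$) to obtain $(\pi^*K_X|_S\cdot C')\ge \tfrac{a}{2+1/n}\to a/2\ge 3/2$, hence $(Q\cdot C')\to 3$.

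Finally, the ``pseudo-fiber-like'' sub-case is incoherent as written: you have just shown $(C\cdot C')\ge 1$, which precludes $C'$ being numerically a fiber, and there is no other mechanism forcing $(\pi^*K_X|_S\cdot C')$ down to $\xi$. Once the previous paragraph's patch is in place, the horizontal bound alone finishes the proof; the closing remarks about the thresholds being ``tightly tuned'' describe the paper's curve-level calibration, not yours.
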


\begin{proof} The argument for $|5K_{X}|$ clearly works for all cases with $m>5$. Here we only provide the proof for the birationality of $|5K_{X}|$. 

    Keeping symbols $Q$ and $E_n$,  and using similar arguments in the proof of Proposition  \ref{d=3}, we only need to show that $|K_S+\ulcorner Q\urcorner|$ induces a birational map. When $n$ is sufficiently large, applying Kawamata-Viehweg vanishing theorem as in the proof of Lemma \ref{5kfib}, one sees  that $|K_S+\ulcorner Q\urcorner|$ separates different generic irreducible  elements of $|C|$. So, by Tankeev's principle \cite[Section 2.1]{Che01}, it suffices to show  $|K_S+\ulcorner Q\urcorner||_C$ induces a birational map.

    Note that
    \begin{align*}
       Q-C-\frac{1}{a}E_n|_S\equiv\left(2-\frac{2}{a}-\frac{1}{n}(1+\frac{1}{a})\right)\pi^*(K_X)|_S
    \end{align*}
    is nef and big when $n$ is sufficiently large and $a>1$. By Kawamata-Viehweg vanishing theorem \cite{KV,VV}, we have the surjective map 
 \begin{align*}
     H^0(S,K_{S}+\ulcorner Q-\frac{1}{a}E_n|_S\urcorner)\to H^0(C,K_C+\ulcorner Q-C-\frac{1}{a}E_n|_S\urcorner|_{C}).
 \end{align*}
  Let $P=(Q-C-\frac{1}{a}E_n|_S)|_C$, then
  \begin{align*}
      |K_C+\ulcorner Q-C-\frac{1}{a}E_n|_S\urcorner|_{C}|\lsgeq|K_S+\ulcorner P\urcorner|,
  \end{align*}
  so we only need to show that $|K_C+\ulcorner P\urcorner|$ induces a birational map.

Note that
    \begin{align*}
        \deg(P)=\left(2-\frac{2}{a}-\frac{1}{n}(1+\frac{1}{a})\right)\xi
    \end{align*}
    So we have $\deg(P)>2$ whenever $\xi> 2$ and $n$ is sufficiently large. In this case, $|K_C+\ulcorner P\urcorner|$ induces a birational map and $\varphi_{5,X}$ is birational.

    When $76/71<\xi\leq 2$, we argue in another way to show $\varphi_{5,X}$ is birational. By \cite[Lemma 2.7]{Chen14} and Lemma \ref{d=3lem}, we have
    \begin{align*}
        \pi^*(K_X)|_S \geq \left(\frac{(\pi^*(K_X)|_S)^2}{2(\pi^*(K_X)|_S\cdot C)} - \varepsilon \right) C \geq \frac{1}{2}\sqrt{\frac{\alpha_0^3a}{\xi}}C
    \end{align*}
   where $\varepsilon$ is a sufficiently small positive number.  Take 
    \begin{align*}
        0\leq H_c\sim_{\bQ}\pi^*(K_X)|_S-\beta C, \quad \text{where}\ \beta=\frac{1}{2}\sqrt{\frac{\alpha_0^3a}{\xi}}
    \end{align*}
    Note that
    \begin{align*}
       Q-C-\frac{1}{\beta}H_c\equiv\left(2-\frac{1}{\beta}-\frac{1}{n}\right)\pi^*(K_X)|_S
    \end{align*}
    is nef and big when $n$ is sufficiently large and $\beta>1/2$. By Kawamata-Viehweg vanishing theorem \cite{KV,VV}, we have the surjective map 
 \begin{align*}
     H^0(S,K_{S}+\ulcorner Q-\frac{1}{\beta}H_c\urcorner)\to H^0(C,K_C+\ulcorner Q-C-\frac{1}{\beta}H_c\urcorner|_{C}).
 \end{align*}
  Let $P'=(Q-C-\frac{1}{\beta}H_c)|_C$, then
  \begin{align*}
      |K_C+\ulcorner Q-C-\frac{1}{\beta}H_c\urcorner|_{C}|\lsgeq|K_C+\ulcorner P'\urcorner|,
  \end{align*}
  so we only need to show that $|K_C+\ulcorner P'\urcorner|$ induces a birational map.
  Note that
    \begin{align}\label{degp'}
        \deg(P')=\left(2-\frac{1}{\beta}-\frac{1}{n}\right)\xi=\left(2-2\sqrt{\frac{\xi}{\alpha_0^3a}}-\frac{1}{n}\right)\xi
    \end{align}
    So we have $\deg(P')>2$ whenever $76/71<\xi\leq 2$, $\alpha_0^3>83$, and $n$ is sufficiently large. In this case, $|K_C+\ulcorner P'\urcorner|$ induces a birational map and $\varphi_{5,X}$ is birational.
\end{proof}

\begin{rem}
    In fact, under the same assumption as that of Proposition \ref{d=2}, we always have $\xi\geq \frac{1}{3} (\sigma^*(K_{S_0})\cdot C)\geq 4/3$ when $g(C)\geq 3$. So the difficulty necessarily arises when $g(C)=2$.
\end{rem}

\subsection{The case with small canonical degree}\

The difficulty arises while we are treating the case with a smaller canonical degree $\xi$. The key point is that we will show the existence of a class of $(1,2)$-surfaces.  

\begin{prop}\label{12fib} 
Let $m\geq 5$ be an integer and $X$ a minimal projective $3$-fold of general type. Assume that $X$ satisfies the following conditions:
\begin{enumerate}
    \item $\Vol(X)>\alpha_0^3>83$ for some number $\alpha_0>0$;
    \item $q(X)=h^1(X,\OX)=0$;    
    \item there is a Weil divisor $L$ on $X$ such that $h^0(X,L)\geq 5$ and that $D=2K_X-L$ is pseudo-effective.
\end{enumerate}
Suppose $d_L=2$, $\xi \leq 76/71$, $|mK_{X'}-M'|\neq \varnothing$ and that $\varphi_{m,X}$ is non-birational. Then $X'$ is birationally fibred by (1,2)-surfaces. 
\end{prop}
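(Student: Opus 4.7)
My plan would be to imitate Case~III of the proof of Proposition~\ref{tankeev1}: run Takayama's effective induction on non-klt centres \cite{Takayama06} to produce $2$-dimensional non-klt centres through very general points of $X'$, spread them into an honest linear pencil on $X'$ using Lemma~\ref{pencil} (which crucially needs $q(X)=0$), and then force the generic fibre of the resulting pencil to be birational to a $(1,2)$-surface by playing Lemma~\ref{5kfib} against the assumed failure of birationality of $\varphi_{m,X}$.

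\textbf{Non-klt centres and spreading.} For each very general $x\in X'$, I would invoke \cite[Lemma~10.4.12]{La} together with the perturbation trick \cite[Lemma~2.5]{HM06} to obtain $\lambda<3/\alpha_0<1$ and an effective $\bQ$-divisor $\Delta_x\sim_{\bQ}\lambda K_{X'}$ whose unique minimal non-klt centre at $x$ I would call $V_x$. Since $\alpha_0>\sqrt[3]{83}>3$ and $|mK_{X'}-M'|\neq\varnothing$, Cases~I and~II in the proof of Proposition~\ref{tankeev1} would carry over verbatim to rule out $\dim V_x\leq 1$: the tie-breaking in Case~I would produce an isolated $0$-dimensional non-klt centre of $(X',\lambda'(4K_{X'}-M'))$ with $\lambda'<1$, whence Tankeev's principle would make $\varphi_{m,X}$ birational; and the cut-down of Case~II via \cite[Theorem~4.1]{HM06} reduces $\dim V_x=1$ to the previous case. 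Hence $\dim V_x=2$ at very general points. By \cite[Lemma~3.2]{Todorov} the $V_x$ fit into a family $\{g(X''_b)\}_{b\in B}$, and Lemma~\ref{pencil} converts it into a linear pencil on $X'$; after Stein factorisation and passing to a higher birational model I would obtain a fibration $f_1\colon X'\to\bP^1$ whose general fibre $F$ is a surface of general type and satisfies $\pi^*K_X\geq (1/\lambda)F$ with $1/\lambda>\alpha_0/3>1$.

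\textbf{Identifying $F$.} To show $F$ is birational to a $(1,2)$-surface, I would rule out every alternative using Lemma~\ref{5kfib}. If $\Vol(F)\geq 4$ and $p_g(F)>0$, Lemma~\ref{5kfib}(5) is applicable since $1/\lambda>1$ and forces $|5K_X|$ to be birational---contradicting the hypothesis. If $\Vol(F)\leq 3$ and $F$ is not a $(1,2)$-surface, then \cite[Lemma~2.6(1)]{Chen14} combined with $\Vol(X)>83>36$ strengthens the inequality to $\pi^*K_X\geq (4+\varepsilon)F$, and Lemma~\ref{5kfib}(1) again forces birationality, another contradiction. This leaves only $F$ birational to a $(1,2)$-surface or the configuration $p_g(F)=0$ with $\Vol(F)\geq 4$.

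\textbf{The hard part.} The main obstacle is the residual case $p_g(F)=0$ with $\Vol(F)\geq 4$. Even though $q(X)=0$ forces $q(F)=0$ via the Leray spectral sequence of $f_1$, the hypothesis $\chi(\OX)\neq 1$ that powered the analogous step in Proposition~\ref{d=1} is absent in the setting of Proposition~\ref{12fib}, so that route is unavailable. I would expect the resolution to use the divisor $L$ essentially: by restricting $L$ (or a suitable fractional multiple) to $F$ and applying Kawamata--Viehweg vanishing via the pseudo-effectivity of $2K_X-L$, one ought to transfer sections from $|L|$ to $|K_F|$, thereby upgrading the hypothesis $h^0(L)\geq 5$ into $p_g(F)\geq 2$ and producing precisely the desired $(1,2)$-surface fibration. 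This is also where the numerical threshold $h^0(L)\geq 5$, and hence the main-theorem bound $K_X^3\geq 86$, would enter.
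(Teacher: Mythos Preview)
Your proposal has two genuine gaps.

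First, your handling of the low-dimensional non-klt centres is not correct. Cases I and II of Proposition~\ref{tankeev1} only establish $|5K_{X'}-M'|\neq\varnothing$; producing an isolated $0$-dimensional non-klt centre of $(X',\lambda'(4K_{X'}-M'))$ at a single very general point does not, via ``Tankeev's principle'' or otherwise, force $\varphi_{m,X}$ to be birational. The paper instead runs Takayama's \emph{two-point} induction: one fixes two very general points $x,y$, builds $\Delta_1\sim_{\bQ}t_1K_{X'}$ with $t_1<3\sqrt[3]{2}/\alpha_0$ that is lc-not-klt at $x$ and not-klt at $y$, and in Case~I Nadel vanishing shows directly that $|(\lfloor t_1\rfloor+2)K_{X'}|$ separates $x$ and $y$; Case~II reduces to Case~I with $t_2<2+2t_1+\epsilon<4$.

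Second, and more seriously, your Case~III misses the central idea and never uses the hypotheses $d_L=2$ and $\xi\leq 76/71$. The paper performs no case analysis on $\Vol(F)$ or $p_g(F)$. Once the pencil $X'\to\bP^1$ with general fibre $F$ and $\pi^*K_X\geq(1/t_1)F$ is in hand, one intersects with the moving genus-$2$ curve $C$ coming from $f\colon X'\to\Sigma$: since $t_1<3\sqrt[3]{2}/\alpha_0$ and $\xi\leq 76/71$, one gets $(F\cdot C)\leq t_1\,\xi<1$, hence $(F\cdot C)=0$ and $C$ is contained in $F$. Thus $F$ is itself fibred by genus-$2$ curves in the family of $C$. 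Then \cite[Lemma~3.3]{CZuo08} gives $\pi^*(K_X)|_F\equiv\frac{1}{1+t_1}\sigma^*(K_{F_0})+H$ with $H\geq 0$, whence $(\sigma^*K_{F_0}\cdot C)\leq(1+t_1)\,\xi<2$, and surface theory (\cite[Lemma~2.4]{EXPIII}) forces $F$ to be a $(1,2)$-surface. There is no ``hard part'' of the shape you describe: no section-transfer from $|L|$ to $|K_F|$ occurs, the case $p_g(F)=0$ never arises, and the threshold $h^0(L)\geq 5$ is not used in this proposition at all (it enters in Proposition~\ref{d=2} through $a\geq h^0(L)-2\geq 3$). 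The bound $\alpha_0^3>83$ is exactly what makes both $t_1\xi<1$ and $(1+t_1)\xi<2$ hold.
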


\begin{proof}
     The argument for $|5K_{X}|$ works for all cases with $m>5$. So it suffices to study the case with $m=5$.

    Applying Fujita's approximation (see \cite[Subsection 11.4]{La}, and also \cite{Takayama06}) and possibly replacing $X'$ by a higher birational model, we may write $K_{X'}\sim_{\bQ}A+E$, where $E$ is an effective $\bQ$-divisor and $A$ is an ample $\bQ$-divisor such that $0<\Vol(X)-\Vol(A)\ll1$. Pick very general points $x,y\in X'$. There exists an effective $\bQ$-divisor $\Delta_1\sim_{\bQ}t_1K_{X'}$ with $t_1<\frac{3\sqrt[3]{2}}{\sqrt[3]{\Vol(X) }}+\epsilon$, where
    $0<\epsilon\ll1$, such that $(X',\Delta_1)$ is log canonical but not klt at $x$, and that $(X',\Delta_1)$ is not klt at $y$. Recall our assumption for $\Vol(X)$, we may choose $t_1<3\sqrt[3]{2}/\alpha_0$. By \cite[Lemma 2.5]{HM06}, after modulo a small perturbation, we may also assume that there exists a unique non-klt center of $(X',\Delta_1)$ at $x$, say $V_1$. We next discuss case by case on the dimension of $V_1$.

\medskip

    \textbf{Case I} - $\dim V_1=0$. 
    
    In this case, $x$ is an isolated non-klt center of $(X',\Delta_1)$. By Nadel vanishing we see that $|(\lfloor t_1\rfloor+2)K_{X'}|$ separates $x$ and $y$. To assure the birationality of $|5K_{X'}|$, we need $t_1<4$, which can be achieved under our assumption. Hence Case I does not happen. 
\medskip

    \textbf{Case II} - $\dim V_1=1$. 
     
    We apply Takayama's induction \cite[Proposition 5.3; Lemma 5.8]{Takayama06} to conclude that there exists an effective $\bQ$-divisor $\Delta_2\sim_{\bQ}t_2K_{X'}$ with 
    \begin{align*}
        t_2<2+2t_1+\epsilon, 
    \end{align*}
   where $\epsilon>0$ can be sufficiently small, such that $x$ is an isolated non-klt center of $(X',\Delta_2)$, and that $(X',\Delta_2)$ is not klt at $y$.
   Then we reduce it to case I, and we need $t_2<4$ to assure the birationality of $|5K_{X'}|$, which can be also achieved under our assumption. Hence Case II does not happen. 
\medskip

    \textbf{Case III} - $\dim V_1=2$. 
    
     In this case, we want to show that $X'$ is birationally fibred by $(1,2)$-surfaces.

     Arguing as in the proof of Lemma \ref{tankeev1}, we obtain a smooth projective 3-fold $X''$ and the following diagram:
         $$
    \begin{tikzcd}
    X'' \arrow[r, "g"] \arrow[d, "h"] & X' \\
    B                                 &   
    \end{tikzcd}
    $$
    where
    \begin{enumerate}
        \item $h$ is a fibration onto a smooth curve $B$;
        \item $g$ is a generically finite morphism;
        \item for a general fiber $X_b''$ of $h$, there exists some $x\in X'$ and an effective $\bQ$-divisor $\Delta_1\sim_{\bQ}t_1K_{X'}$ such that $g(X_{b}'')$ is the minimal non-klt center of $(X',\Delta_1)$ at $x$.
    \end{enumerate}
    Again arguing as in the proof of Lemma \ref{tankeev1}, we may construct a fibration $X'\to \bP^1$ after passing to a higher model of $X'$ and replace $f$ and $\pi$ correspondingly. And a general fiber of this fibration $F$ satisfies $\pi^*(K_{X})\geq\frac{1}{t_1} F$. Since $\xi=(\pi^*(K_X)\cdot C)\leq 76/71$, and $C$ moves over the surface $\Sigma$, we have  
    \begin{align*}
        (F\cdot C)\leq t_1(\pi^*(K_X)\cdot C)<1,
    \end{align*}
    which implies that $(F\cdot C)=0$. Hence $C$ is contained in a fiber of $X'\to \bP^1$. In other words, $F$ is fibered by some genus $2$ curves in the same algebraic family of $C$. By \cite[Lemma 3.3]{CZuo08}, there exists an effective $\bQ$-divisor $H$ on $F$ such that 
 \begin{align}
     \pi^*(K_X)|_F\equiv\frac{1}{1+t_1}\sigma^*(K_{F_0})+H,
 \end{align}
 where $\sigma:F\to F_0$ is the contraction onto the minimal model. It follows that
    \begin{align*}
        (\sigma^*(K_{F_0})\cdot C)\leq(1+t_1)(\pi^*(K_X)|_F\cdot C)<2.
    \end{align*}
    Hence $F$ must be a (1,2)-surface by the surface theory  (see \cite[Lemma 2.4]{EXPIII} for a direct reference), which finishes the proof.
\end{proof}

\begin{prop}\label{5kbird2} 
Let $m\geq 5$ be an integer and $X$ be a minimal projective $3$-fold of general type. Assume that $X$ satisfies the following conditions:
\begin{enumerate}
    \item $\Vol(X)>\alpha_0^3>83$ for some number $\alpha_0>0$;
    \item $q(X)=h^1(X,\OX)=0$;
\item there is a Weil divisor $L$ on $X$ such that $h^0(X,L)\geq 5$ and that 
$D=2K_X-L$ is pseudo-effective.
\end{enumerate}
Suppose that $d_L=2$, and that $|mK_{X'}-M'|\neq \varnothing$. Then $|mK_X|$ gives a birational map.
\end{prop}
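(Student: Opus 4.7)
The plan is to split the argument according to the value of $\xi$. When $\xi > 76/71$, the birationality of $|mK_X|$ is delivered directly by Proposition \ref{d=2}. So the real content is the case $\xi \leq 76/71$, which I shall handle by contradiction.

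Assume $\xi \leq 76/71$ and that $\varphi_{m,X}$ is non-birational. Then all the hypotheses of Proposition \ref{12fib} are in force, and that proposition produces a birational $(1,2)$-surface fibration on $X'$: after passing to a higher birational model if needed, one has a morphism $g: X' \to \bP^1$ whose general fiber $F$ is a $(1,2)$-surface, and $\pi^*(K_X) \sim_{\bQ} p F + E_p$ for an effective $\bQ$-divisor $E_p$ and rational $p = 1/t_1 > \alpha_0/(3\sqrt[3]{2})$.

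The strategy is now to invoke Lemma \ref{5kfib}(1) to force the birationality of $|mK_X|$ from this pencil, contradicting our assumption. Since $F$ is a $(1,2)$-surface with $\Vol(F) = 1$, conditions (3)--(5) of Lemma \ref{5kfib} are unavailable; only condition (1) applies, and it requires $p > 4$. The raw bound from Proposition \ref{12fib} alone gives only $p \gtrsim 1.15$ under $\alpha_0^3 > 83$, so the hypothesis $h^0(L) \geq 5$ must be brought in. The structural input, established during the proof of Proposition \ref{12fib}, is that the generic fiber $C$ of $f: X' \to \Sigma$ lies in $F$ as a member of the canonical pencil of $F_0$. Consequently $f$ factors as $X' \to \Sigma \to \bP^1$, with $\Sigma$ birational to a ruled surface over $\bP^1$ whose fibers $T$ pull back to $F$ on $X'$. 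The movable divisor $M' = f^*A$ is pulled back from a divisor $A$ on $\Sigma$ with $h^0(A) \geq 5$, and a case analysis of $A$ on the ruled surface (using that $\varphi_A$ has $2$-dimensional image, which forces $A^2 > 0$ and constrains the numerical class of $A$) produces sufficiently many copies of $F$ inside $M'$. Combined with the pseudo-effectivity of $2K_X - L$, which yields $\pi^*(K_X) \geq \tfrac{1}{2} M'$, this boosts the coefficient $p$ in the decomposition $\pi^*(K_X) \sim_{\bQ} pF + E_p$ above $4$.

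The hard part is the final numerical check: verifying that the combined data (a $(1,2)$-surface pencil, $h^0(L) \geq 5$, and $\Vol(X) > 83$) is tight enough to guarantee $p > 4$. This is precisely the step behind the threshold $K_X^3 \geq 86$, which is the smallest canonical volume that guarantees $h^0(L) \geq 5$ via Lemma \ref{2forms} and Reid's Riemann--Roch, the minimal positivity level required to extract enough copies of $F$ from $M'$ and close the contradiction.
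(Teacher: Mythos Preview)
Your reduction via Propositions \ref{d=2} and \ref{12fib} is exactly right, and so is the recognition that what remains is to verify $p>4$ in order to feed the $(1,2)$-surface fibration into Lemma \ref{5kfib}(1). The gap is in how you propose to achieve $p>4$.

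Your plan is to factor $f\colon X'\to\Sigma$ through the $(1,2)$-fibration $g\colon X'\to\bP^1$, view $\Sigma$ as a ruled surface with fiber $T$ satisfying $f^*T=F$, and then use $h^0(A)\geq 5$ with $A^2>0$ to force $A\geq kT$ with $k$ large enough that $\pi^*(K_X)\geq\tfrac{1}{2}M'=\tfrac{1}{2}f^*A$ yields $p>4$. This does not work numerically. On $\bF_0=\bP^1\times\bP^1$ with $T$ the fiber class $(0,1)$, the divisor $A$ of class $(1,2)$ has $h^0(A)=6\geq 5$ and $A^2=4>0$, yet $A$ dominates only $2T$; hence $M'\geq 2F$ and the pseudo-effectivity of $2K_X-L$ gives at best $\pi^*(K_X)\geq F$, far short of $p>4$. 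Worse, the pseudo-effectivity of $2K_X-L$ does not hand you an \emph{effective} decomposition $\pi^*(K_X)\sim_\bQ pF+E_p$, which is what Lemma \ref{5kfib} actually requires. So the ruled-surface case analysis cannot close the argument, and the assertion that ``this is precisely the step behind the threshold $K_X^3\geq 86$'' is not correct here: the role of $h^0(L)\geq 5$ in the paper is to guarantee $a\geq 3$ in Proposition \ref{d=2}, not to manufacture copies of $F$.

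The paper's proof at this point is a one-liner and does not use $h^0(L)\geq 5$ at all. Once $F$ is a $(1,2)$-surface one has $\Vol(F)=1$, and since $\Vol(X)>83>12$, the canonical restriction inequality \cite[Lemma 2.6(1)]{Chen14} (the same tool already invoked in the proofs of Propositions \ref{d=1} and \ref{tankeev1}) directly gives $\pi^*(K_X)\geq(4+\delta)F$ for some small $\delta>0$. Then Lemma \ref{5kfib}(1) applies and $|mK_X|$ is birational, contradicting the standing assumption. You should replace your ruled-surface analysis with this single citation.
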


\begin{proof}
    By Proposition \ref{d=2} and Proposition \ref{12fib}, we may assume that $X'$ is birationally fibred by (1,2)-surfaces. Since $\Vol(X)>12$. Then we have $\pi^*(K_{X})\geq (4+\delta)F$ by \cite[Lemma 2.6(1)]{Chen14} for some small positive rational number $\delta$. The statement follows directly from Lemma \ref{5kfib}(1).
\end{proof}

\section{Proof of Theorem \ref{mainthm}}

There remains one exceptional case to consider, namely,  $\chi(\OX)=1$. This situation can be treated directly  since one has sufficiently large plurigenera.

\begin{prop}\label{chi=1}
    Let $m\geq 5$ be an integer and $X$ be a minimal projective $3$-fold of general type. Assume that $X$ satisfies the following conditions:
\begin{enumerate}
    \item $\Vol(X)>\alpha_0^3>83$ for some number $\alpha_0>0$;
    \item $q(X)=h^1(X,\OX)=0$;
    \item $\chi(\OX)=1$. 
\end{enumerate}
Then $|mK_X|$ gives a birational map.
\end{prop}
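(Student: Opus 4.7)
The plan is to use Reid's Riemann-Roch formula to extract very large values of $P_2(X)$ and $P_3(X)$ from the hypothesis $\chi(\OX)=1$, and then run the machinery of Sections \ref{sec: 3} and 4 around $L:=2K_X$. Concretely, Reid's formula, combined with standard non-negativity estimates for the basket contribution, gives
\[
P_2(X)\;\geq\;\tfrac{1}{2}K_X^3-3\;>\;38\quad\text{and}\quad P_3(X)\;\geq\;\tfrac{5}{2}K_X^3-5\;>\;200.
\]
In particular $h^0(X,2K_X)\geq 39\geq 5$, so setting $L:=2K_X$ meets the hypotheses of Set-up \ref{assumption} with $D:=2K_X-L=0$ (which is trivially pseudo-effective). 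The analysis then proceeds by $d_L\in\{1,2,3\}$; the new wrinkle relative to Sections \ref{sec: 3}--4 is that $\chi(\OX)\neq 1$ is now unavailable, so Propositions \ref{d=1} and \ref{tankeev1} must be replaced.

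For $d_L=1$, the movable part $M'\sim aF$ satisfies $a\geq h^0(L)-1\geq 38$, whence $\pi^*(K_X)\geq\tfrac{a}{2}F\geq 19F$ and $p\geq 19>4$ in the notation of Lemma \ref{5kfib}. Lemma \ref{5kfib}(1) then yields the birationality of $\varphi_{5,X}$ \emph{without} any hypothesis on $p_g(F)$; the role played by $\chi(\OX)\neq 1$ in Proposition \ref{d=1} is taken over by the much stronger inequality $p\geq 19$ coming from the largeness of $P_2$.

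For $d_L\geq 2$, the key replacement is a direct verification of $|5K_{X'}-M'|\neq\varnothing$ that bypasses the non-klt induction of Proposition \ref{tankeev1}. Using $M'+Z'=\rounddown{\pi^*L}=\rounddown{2\pi^*(K_X)}$, one has as $\bQ$-divisors on $X'$
\[
5K_{X'}-M'\;=\;3\pi^*(K_X)+\{2\pi^*(K_X)\}+5E_{\pi}+Z'\;\geq\;\rounddown{3\pi^*(K_X)},
\]
where $\{\cdot\}$ denotes the fractional part. Since $\pi_*\OO_{X'}(\rounddown{m\pi^*(K_X)})=\OO_X(mK_X)$ for $m\geq 0$ on the minimal terminal $3$-fold $X$, this yields
\[
h^0(X',5K_{X'}-M')\;\geq\;h^0(X,3K_X)=P_3(X)>200,
\]
so $|5K_{X'}-M'|\neq\varnothing$. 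With this non-vanishing in hand, Proposition \ref{d=3} handles $d_L=3$ (its threshold $(2\sqrt{6}/3)^3<83$ is trivially met), Proposition \ref{d=2} handles $d_L=2$ with $\xi>76/71$, and Propositions \ref{12fib}--\ref{5kbird2} handle $d_L=2$ with $\xi\leq 76/71$; none of these propositions requires $\chi(\OX)\neq 1$.

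The main obstacle is the bypass of Proposition \ref{tankeev1}: in the $\chi(\OX)=1$ regime, its Case III may produce a fibration $X'\to\bP^1$ whose general fiber $F$ has $p_g(F)=0$, breaking the appeal to Lemma \ref{5kfib}(5). The remedy above---reading $h^0(X',5K_{X'}-M')\geq P_3(X)$ directly off the explicit $\bQ$-divisor identity rather than through the construction of isolated $0$-dimensional non-klt centers---is the new technical ingredient needed for the $\chi(\OX)=1$ regime.
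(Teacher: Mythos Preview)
Your proof is correct and follows essentially the same approach as the paper: set $L=2K_X$, use Reid's Riemann--Roch with $\chi(\OX)=1$ to obtain $P_2(X)\gg 0$ (the paper records $P_2>10$, you record $P_2>38$) so that in the $d_L=1$ case $a\geq h^0(L)-1$ is large enough for Lemma~\ref{5kfib}(1), and use $P_3(X)>0$ to obtain $|5K_{X'}-M'|\neq\varnothing$ directly when $d_L\geq 2$, after which Propositions~\ref{d=3} and~\ref{5kbird2} apply. Your explicit divisor identity for $5K_{X'}-M'$ is a spelled-out version of the paper's one-line remark ``Since $P_3(X)>0$, we have $|mK_{X'}-M'|\neq\varnothing$''; note that with the paper's convention $\pi^*L\sim_\bQ M'+Z'$ one simply has $5K_{X'}-M'\sim_\bQ 3\pi^*(K_X)+5E_\pi+Z'$, so the extra $\{2\pi^*(K_X)\}$ term in your display is an artifact of a slightly different bookkeeping for $Z'$ but does not affect the conclusion.
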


\begin{proof}
    By Reid's Riemann-Roch formula \cite{Rei87}, we have
    \begin{eqnarray*}
        P_2(X)&=&h^0(X,\OX(2K_X))=\frac{1}{2}K_X^3-3\chi(\OX)+l(2)>10,\ \text{and}\\
        P_3(X)&=&h^0(X,\OX(3K_X))=\frac{5}{2}K_X^3-5\chi(\OX)+l(3)>0,
    \end{eqnarray*}
    where $l(2)$ and $l(3)$ are non-negative numbers according to Reid. So we may choose $L=2K_X$. 
    
    Suppose $d_L=1$. Since $q(X)=0$, $|L|$ induces the fibration $f:X'\lrw \bP^1$. Denote by $F$ a general fiber of $f$. We have $M'\sim aF$, where $a\geq h^0(L)-1>9$. It follows that $\pi^*(K_X)\sim_{\bQ}(4+\delta)F+\Delta$, where $\delta$ is a sufficiently small positive rational number and $\Delta$ is an effective $\bQ$-divisor. Then the statement follows from Lemma \ref{5kfib}(1).

    Suppose $d_L\geq 2$. Since $P_3(X)>0$, we have $|mK_{X'}-M'|\neq \varnothing$. Then the statement follows from Proposition \ref{d=3} and Proposition \ref{5kbird2}.
    
\end{proof}

\begin{proof}[Proof of Theorem \ref{mainthm}]
     By main results of \cite{CCCJ,CCZ07} and Proposition \ref{chi=1}, we may assume $q(X)=0$,  $\chi(\OX) \neq 1$, and $X$ is non-Gorenstein. If, moreover, $h^2(\OX)\geq 13$, then we may choose $L$ to be the divisor in Lemma \ref{2forms}.  Otherwise, we have $h^2(\OX)\leq 12$. then we have $\chi(\OX)=1+h^2(\OX)-p_g(X)\leq 13$. By Reid's Riemann-Roch formula \cite{Rei87}, we have
    \begin{align*}
        P_2(X)=h^0(X,\OX(2K_X))=\frac{1}{2}K_X^3-3\chi(\OX)+l(2)>4,
    \end{align*}
    where $l(2)>0$ since $X$ is non-Gorenstein. We may choose $L=2K_X$. Then the statement follows from Propositions \ref{d=1}, \ref{tankeev1}, \ref{d=3}, \ref{5kbird2}. We are done. 
\end{proof}

As a direct application of Lemma \ref{5kfib}, we can also improve Theorem 1.1 of \cite{Chen14}.

\begin{cor}
    Let $X$ be a minimal projective $3$-fold of general type. Assume that $p_g(X)=2$ and $\Vol(X)>12$. Then $|5K_X|$ gives a birational map.
\end{cor}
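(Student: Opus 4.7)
The plan is to apply Lemma~\ref{5kfib} to the canonical pencil induced by $p_g(X)=2$. Take a resolution $\pi\colon X'\to X$ so that the movable part of $|K_{X'}|$ is base point free; after Stein factorization of the induced canonical rational map, we obtain a fibration $f\colon X'\to \bP^1$ whose general fiber $F$ is a smooth projective surface of general type. Since $h^0(K_{X'})=p_g(X)=2$, the movable part of $|K_{X'}|$ is linearly equivalent to $F$, and thus
\[
\pi^*(K_X)\sim_{\bQ} pF + E_p
\]
for some $p\geq 1$ and some effective $\bQ$-divisor $E_p$. A standard adjunction computation (using $F|_F\sim 0$ together with the restriction sequence $0\to \OO_{X'}(K_{X'}-F)\to \OO_{X'}(K_{X'})\to \OO_F(K_F)\to 0$ and the fact that $h^0(K_{X'}-F)=1$ since $F$ is a general member of the pencil) then yields $p_g(F)\geq h^0(K_{X'})-h^0(K_{X'}-F)\geq 1$.

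The remainder of the argument is a case analysis on $\Vol(F)$. If $F$ is a $(1,2)$-surface (equivalently $\Vol(F)=1$), then \cite[Lemma 2.6(1)]{Chen14} combined with $\Vol(X)>12$ produces a rational number $\delta>0$ with $\pi^*(K_X)\geq (4+\delta)F$, whence Lemma~\ref{5kfib}(1) applies. If $\Vol(F)=2$, $\Vol(F)=3$, or $\Vol(F)\geq 4$, we invoke Lemma~\ref{5kfib}(3), (4), or (5) respectively, whose coefficient thresholds $p>3/2$, $p>8/7$, and $p>1$ are exactly matched by the corresponding boosts supplied by the same numerical machinery of \cite[Lemma 2.6(1)]{Chen14} under $\Vol(X)>12$. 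In every sub-case, $p_g(F)\geq 1$ is already in hand, and we conclude that $|5K_X|$ gives a birational map.

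The main obstacle is the tightness of the threshold $\Vol(X)>12$ in the $(1,2)$-surface case: the lower bound $p\geq 4+\delta$ coming from \cite[Lemma 2.6(1)]{Chen14} is essential there and is barely achieved by the hypothesis. The remaining sub-cases are comparatively easier, since Lemma~\ref{5kfib}(3)--(5) impose much weaker demands on $p$ once the larger volume of $F$ compensates for the weaker positivity of the moving coefficient.
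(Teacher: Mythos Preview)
Your approach matches the paper's for $\Vol(F)\leq 4$, but there is a genuine gap when $\Vol(F)\geq 5$. You lump all of $\Vol(F)\geq 4$ into the case handled by Lemma~\ref{5kfib}(5), asserting that \cite[Lemma~2.6(1)]{Chen14} boosts the moving coefficient $p$ above $1$ under $\Vol(X)>12$. That lemma, however, gives (up to $\varepsilon$) a bound of the shape $p>\Vol(X)/(3\Vol(F))$; with $\Vol(X)>12$ this yields $p>4$, $p>2$, $p>4/3$, $p>1$ for $\Vol(F)=1,2,3,4$ respectively---exactly matching the thresholds in Lemma~\ref{5kfib}(1),(3),(4),(5)---but for $\Vol(F)\geq 5$ it only gives $p>4/5$, which is \emph{strictly below} the hypothesis $p>1$ of Lemma~\ref{5kfib}(5). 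So your case analysis is incomplete.

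The paper treats $\Vol(F)\geq 5$ separately, and the fix is short: since $p_g(X)>0$, the system $|5K_{X'}|$ already separates distinct general fibers of $f$ (because $|K_{X'}|$ does), so the fiber-separation step in the proof of Lemma~\ref{5kfib}---the only place where the strict inequality $p>1$ is used---is automatic. One then checks directly that, with $p\geq 1$ and $\Vol(F)\geq 5$, the Masek--Langer inequalities \eqref{masekpf1} and \eqref{2.5} in that proof are mutually contradictory, so $|K_F+\ulcorner Q_m\urcorner|$ is birational and the argument goes through. You should split off $\Vol(F)\geq 5$ and insert this observation.
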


\begin{proof}
    The canonical linear system $|K_X|$ induces a fibration $f:X'\lrw \bP^1$. Denote by $F$ a general fiber of $f$. Since $p_g(X)>0$, we have $p_g(F)>0$. We always have $\pi^*(K_X)\geq F$.
    
    Suppose $\Vol(F)=1$. Since $\Vol(X)>12$. We have $\pi^*(K_{X})\geq (4+\delta)F$ by \cite[Lemma 2.6(1)]{Chen14} for some small positive rational number $\delta$. The statement follows directly from Lemma \ref{5kfib}(1). The case with $2\leq\Vol(F)\leq4$ can also be shown in the similar way by Lemma \ref{5kfib} (3),(4),(5).

    Suppose $\Vol(F)\geq5$. Since $p_g(X)>0$, $|mK_{X'}|$ separates different general fibers of $f$. So the assumption that $p>1$ is unnecessary in this situation. Moreover, relations (\ref{2.4}) and (\ref{2.5}) are contradictory to each other when $p\geq1$ and $\Vol(F)\geq5$. This case is therefore concluded by the proof of Lemma \ref{5kfib}.
\end{proof}

Apart from some known examples with non-birational $5$-canonical maps found in Fletcher \cite{Fletcher}  and Chen-Jiang-Li \cite{CJL}, we have the following more examples.

\begin{exmp}\   Denote by $X_d$ the general homogeneous hypersurface $X_d$ of degree $d$ in weighted projective spaces. The following $3$-folds are all minimal with at worst canonical singularities: 
\begin{enumerate}
\item $X_{26}\subset \bP(1,2,3,5,13)$, $p_g=2$, $K^3=8/15$, $\varphi_{6}$ is non-birational.

\item $X_{34}\subset \bP(1,3,4,6,17)$, $p_g=2$, $K^3=3/4$, $\varphi_{5}$ is non-birational. 

\item $X_{36}\subset \bP(1,2,5,7,18)$,
$p_g=2$, $K^3=27/35$, $\varphi_{5}$ is non-birational. 

\item $X_{56}\subset \bP(1,3,8,11,28)$, $p_g=2$, $K^3=125/132$, $\varphi_{5}$ is non-birational.

\end{enumerate}
\end{exmp}

It seems to be very difficult to find more examples with larger canonical volume and with non-birational $5$-canonical maps. According to \cite{Chen03}, such examples necessarily have  the geometric genus $p_g(X)\leq 3$. So we naturally ask the following question:

\begin{op} Let $X$ be a minimal $3$-fold of general type. Are the following lower bound conditions for the canonical volume optimal? 
\begin{enumerate}
\item When $p_g(X)=3$ and $K_X^3>6$, $|5K_X|$ gives a birational map.
\item When $p_g(X)=2$ and $K_X^3>12$, $|5K_X|$ gives a birational map.
\end{enumerate}

\end{op}

\section*{\bf Acknowledgments}

 Chen is a member of the Key Laboratory of Mathematics for Nonlinear Sciences, Fudan University. 

\end{document}